\let\footnote=\endnote
\journal{Journal of Time Series Analysis}	
\begin{document}
\begin{frontmatter}

\title{Robust change point tests by bounded transformations}

\author[AD]{Alexander D{\"u}rre\corref{ad.cor}}
\address[AD]{Fakult\"at Statistik, Technische Universit\"at Dortmund, 44221 Dortmund, Germany}
\author[AD]{Roland Fried}

\cortext[ad.cor]{corresponding author: alexander.duerre@udo.edu, +49-231-755-4288}

\begin{abstract} 
Classical moment based change point tests like the cusum test are very powerful in case of
Gaussian time series with one change point but behave poorly under heavy
tailed distributions and corrupted data. A new class of robust change point tests based on
cusum statistics of robustly transformed observations is proposed. This framework is quite
flexible, depending on the used transformation one can detect for instance changes
in the mean, scale or dependence of a possibly multivariate time series. Simulations indicate that this approach is very powerful in detecting changes in the marginal variance of ARCH processes and outperforms existing proposals for detecting structural breaks in the dependence structure of heavy tailed multivariate time series.
\end{abstract}

\begin{keyword}
short range dependence \sep covariance inequalities \sep median absolute deviation \sep psi functions \sep Robustness

\MSC[2010]  62G35 \sep 62G20 \sep 62M10
\end{keyword}
\end{frontmatter}


\section{Introduction}
There is a fast growing literature on robust change-point detection. Maybe the first contribution is by \cite{page1955test} who proposed a sign based test, interestingly not with robustness in mind, but to get a distribution free procedure. Popular robust procedures are based on signs \citep{mcgilchrist1975note,vogel2015robust}, ranks \citep{bhattacharyya1968,horvath1994,gombay1998,antoch2008}, U-statistics \citep{pettitt1979,csorgo1988,gombay1995application,horvath1996,dehling2013change,dehling2015robust,vogel2015studentized}, quantiles \citep{csorgo1987}, M-estimators \citep{kumar1984,huskova1996,fiteni2002robust,huvskova2012} or distribution functions \citep{deshayes1986}.\\
We propose a general framework for robust change-point detection, using cusum statistics on robustly transformed observations. Denote therefore $\boldsymbol{X}_1,\ldots,\boldsymbol{X}_T$ a $p$-dimensional time series. We assume it to be stationary and short range dependent under the null hypothesis, see Section \ref{techassum} for the technical assumptions. If one is interested in a change of location and expects at most one change point it is common to look at the cusum statistic which for $p=1$ is defined as
\begin{align}\label{ocusum}
W_T(x)=\frac{1}{\sqrt{T}\hat{v}}\left(\sum_{i=1}^{\lfloor Tx \rfloor}X_i-\frac{\lfloor Tx \rfloor}{T}\sum_{i=1}^TX_i\right)
\end{align}
where $\hat{v}$ is a consistent estimator of the long run variance $v=\sum_{h=-\infty}^\infty \mbox{Cov}(X_i,X_{i+h})$ and rejects the null-hypothesis if functionals of $(W_T(x))_{x\in [0,1]}$ such as $\sup_{x \in [0,1]}|W_T(x)|$ are unusually large. If $p>1$ one often modifies (\ref{ocusum}) to the quadratic form
\begin{align}\label{quadcusum}
W_T^2(x)=\frac{1}{T}\left(\sum_{i=1}^{\lfloor Tx \rfloor}\boldsymbol{X}_i-\frac{\lfloor Tx \rfloor}{n}\sum_{i=1}^n\boldsymbol{X}_i\right)^T\hat{V}^{-1}\left(\sum_{i=1}^{\lfloor Tx \rfloor}\boldsymbol{X}_i-\frac{\lfloor Tx \rfloor}{n}\sum_{i=1}^T\boldsymbol{X}_i\right)
\end{align}
where $\hat{V}$ consistently estimates $V=\sum_{h=-\infty}^\infty \mbox{Cov}(\boldsymbol{X}_i,\boldsymbol{X}_{i+h}).$
To gain robustness we will transform the observations by a function $\Psi:\mathbb{R}^p\rightarrow \mathbb{R}^s$ which can be of very different kind to detect a shift for example in location, scale, skewness or dependence. Formal conditions on $\Psi$ can be found in Section \ref{techassum}. The main feature they have in common is the boundedness, we require that $|\Psi(\boldsymbol{x})|_\infty\leq b$ for some $b\in \mathbb{R},$ where $|~.~|_\infty$ denotes the maximum norm. This property is more an advantage than a restriction. By applying a bounded $\Psi$ we also bound the influence of outlying observations from another probability model than the bulk of the observations which could ruin the inference otherwise. An individual outlier can either fake a change-point although the other observations are stationary or also hide a true change-point. Furthermore a bounded $\Psi$ is also advantageous under heavy tailed distributions. Table \ref{examplepsi} contains a selection of useful $\Psi-$functions.\\
\begin{table}[H]
	\begin{center}
	\begin{tabular}{l|l}
		definition&measure\\\hline
		$\Psi_S(\boldsymbol{x})=s(\boldsymbol{x})$&location\\
		$\Psi_H(\boldsymbol{x})=\begin{cases}\boldsymbol{x},& |\boldsymbol{x}|\leq k\\
		s(\boldsymbol{x})k,&|\boldsymbol{x}|> k\end{cases}$&location\\
		$\Psi_{SCov}(\boldsymbol{x})=s(\boldsymbol{x})s(\boldsymbol{x})^T$&spread\\
		$\Psi_{HCov}(\boldsymbol{x})=\begin{cases}\boldsymbol{x}\boldsymbol{x}^T,& |\boldsymbol{x}|\leq k\\
		k^2s(\boldsymbol{x})s(\boldsymbol{x})^T,&|\boldsymbol{x}|> k\end{cases}$&spread\\
		\end{tabular}
	\caption{Selection of useful transformations $\Psi$. The function $s:\mathbb{R}^p\rightarrow R^p$ with $s(\boldsymbol{x})=\boldsymbol{x}/|\boldsymbol{x}|$ for $\boldsymbol{x}\neq 0$ and $s(\boldsymbol{x})=0$ for $x=0$ denotes the spatial sign.\label{examplepsi}}
		\end{center}
\end{table}
Note that these transformations need to be applied to properly standardized random variables. For example the univariate sign function applied to strictly positive random variables will destroy any information of the dataset. To prohibit such behavior we standardize the data marginally by location and scale estimators $\hat{\mu}^{(j)},~\hat{\sigma}^{(j)},~j=1\ldots,p$ and then apply the transformation $\Psi$:
\begin{align}\label{trafo}
\boldsymbol{Y}_{i,T}=\Psi([X_i^{(1)}-\hat{\mu}^{(1)}]/\hat{\sigma}^{(1)},\ldots,[X_i^{(k)}-\hat{\mu}^{(k)}]/\hat{\sigma}^{(k)})'=\Psi(D_{\hat{\boldsymbol{\sigma}}}^{-1}[\boldsymbol{X}_i-\hat{\boldsymbol{\mu}}]),
\end{align}
where $D_{\hat{\boldsymbol{\sigma}}}$ is the diagonal matrix containing $\hat{\sigma}_1,\ldots,\hat{\sigma}_p$ and $\hat{\boldsymbol{\mu}}=(\hat{\mu}_1,\ldots,\hat{\mu}_p)'.$
For now we only assume that these estimators converge in probability to some population values $\mu_j$ and $\sigma_j$ for $j=1,\ldots,k$ and postpone the discussion of the theoretical properties to Section \ref{techassum}. In practice we recommend the highly robust median and median absolute deviation. After the transformation we apply the quadratic cusum statistic (\ref{quadcusum}) to the transformed values:
\begin{align}\label{quadrobcusum}
W_T^2(x)=\frac{1}{T}\left(\sum_{i=1}^{\lfloor Tx \rfloor}\boldsymbol{Y}_{i,T}-\frac{\lfloor Tx \rfloor}{T}\sum_{i=1}^T\boldsymbol{Y}_{i,T}\right)^T\hat{U}^{-1}\left(\sum_{i=1}^{\lfloor Tx \rfloor}\boldsymbol{Y}_{i,T}-\frac{\lfloor Tx \rfloor}{T}\sum_{i=1}^T\boldsymbol{Y}_{i,T}\right).
\end{align}
Here $\hat{U}$ is an estimator for the respective long run variance $U=\mbox{Cov}\sum_{h=-\infty}^\infty \mbox{Cov}(\boldsymbol{Y}_i,\boldsymbol{Y}_{i+h})$,
where  
\begin{align*}
\boldsymbol{Y}_{i}=\Psi([X_i^{(1)}-\mu^{(1)}]/\sigma^{(1)},\ldots,[X_i^{(k)}-\mu^{(k)}]/\sigma^{(k)})'=\Psi(D_{\boldsymbol{\sigma}}^{-1}[\boldsymbol{X}_i-\boldsymbol{\mu}])
\end{align*}
with $D_{\boldsymbol{\sigma}}=\mbox{diag}(\sigma_1,\ldots,\sigma_p)$ and $\boldsymbol{\mu}=(\mu_1,\ldots,\mu_p)'$. One can either choose bootstrap methods \citep{hardle2003bootstrap,buhlmann2002bootstraps,kreiss2011bootstrap}, sub-sampling \citep{dehling2013estimation} or kernel estimators \citep{parzen1957consistent,andrews1991heteroskedasticity}. We propose the latter. For a bandwidth $b_T \in \mathbb{R}_+$ and a kernel function $k:\mathbb{R}_+\rightarrow \mathbb{R}_+$ it is defined as
\begin{align*}
\hat{U}=\frac{1}{T}\sum_{i,j=1}^{T}(\boldsymbol{Y}_{i,T}-\overline{\boldsymbol{Y}}_T)(\boldsymbol{Y}_{j,T}-\overline{\boldsymbol{Y}}_T)^Tk\left(\frac{|i-j|}{b_T}\right)
\end{align*}
where $\overline{\boldsymbol{Y}}_T$ is the arithmetic mean of $\boldsymbol{Y}_{1,T},\ldots,\boldsymbol{Y}_{T,T}$. We use the flat-top-kernel
\begin{align}\label{flattop}
k_F(x)=\begin{cases}
1&|x|\leq 0.5\\
2-2|x|&0.5<|x|\leq 1\\
0&|x|>1
\end{cases}
\end{align}
 which was first proposed in \cite{politis2001nonparametric}.
Our simulations reveal that the optimal bandwidth crucially depends on the dimension $s$ of $\boldsymbol{Y}_{1,T},\ldots,\boldsymbol{Y}_{T,T}.$ For $s=1$ we get good results with $b_T=0.9 T^{1/3}$, if $s>1$ the right choice of $b_T$ gets more complicated. Generally cusum tests get conservative under positive serial correlation and we notice that this effect gets stronger with increasing $s.$ We propose a very short bandwidth $b_T=\log_{1.8+s/40}(T/50)$ which compensates indirectly by underestimating the serial correlation. A comparable bandwidth is also chosen in \cite{aue2009break} where only a very light serial correlation is considered in the simulations.\\
We reject the null hypothesis of a stationary time series if $
M_T=\sup_{x\in [0,1]}W_T^2(x)$
is unusually large. We will show in Section \ref{techassum} that under stationarity $W_T^2(x)$ converges weakly to a well investigated stochastic process $(W^2(x))_{x\in [0,1]}$, often called Bessel bridge \citep{pitman1999law}, see Theorem 1 of Section \ref{techassum} for the necessary assumptions. Suitable asymptotical critical values of $M_T$ were first tabulated by \cite{kiefer1959k} for $p\leq 5$ and are implemented in the R-package robcp \citep{robcp} up to $p=5000$. Crude approximations for large $p$ are given in \cite{aue2009break}. A small selection of critical values can be found in Table \ref{tablequan}.
\begin{table}[H]
	\begin{center}
		\begin{tabular}{l|ccccccccc}
			$\alpha$/$p$&1&2&3&4&5&10&20&50&100\\\hline
			0.9&1.500&2.114&2.623&3.083&3.514&5.450&8.885&18.172&32.624\\
			0.95&1.844&2.508&3.053&3.543&4.000&6.041&9.626&19.219&34.022\\
			0.975&2.191&2.894&3.469&3.984&4.464&6.595&10.310&20.168&35.276\\
			0.99&2.649&3.396&4.004&4.548&5.053&7.288&11.154&21.321&36.783
		\end{tabular}
		\caption{Quantiles of $\sup_{0<x<1}W_T^2(x).$\label{tablequan}}
	\end{center}
\end{table}
Note that the supremum of $(W_T^2(x))_{x\in [0,1]}$ is most sensitive to changes in the middle of the time series. There are other functionals like $\int_0^1 W_T(x)^2~dx$ or weighted suprema $\sup_{0<x<1}W_T^2(x)/q(x)$ with $q(x)\rightarrow 0$ for $x\rightarrow 0$ and $x\rightarrow 1$ which are more powerful if changes occur at the beginning or at the end.\\
We conclude this section by mentioning some similar approaches in the literature. \cite{koul2003asymptotics} are among the first who considered M-estimators in the change point context, more precisely for estimating a change point in a regression model with iid errors. In \cite{fiteni2002robust} additionally to \cite{koul2003asymptotics} short range dependence is considered and the use of a standardization by a scale estimator $\hat{\sigma}$. \cite{han2006truncating} use truncated observations to estimate the time of a location change of a strongly mixing time series with heavy tails. M-estimators are closely connected to our theory. In the one-dimensional location context they are defined as solution of
\begin{align}
\argmin_{\mu \in \mathbb{R}} \sum_{t=1}^T \rho\left(\frac{X_t-\mu}{\hat{\sigma}} \right)\label{Mest1}
\end{align}
where $\hat{\sigma}$ is a scale estimator and $\rho$ a positive usually symmetric and and often convex function. If the latter is true one can reformulate (\ref{Mest1}) such that $\hat{\mu}$ is the unique solution of
\begin{align}\label{Mest2}
\sum_{t=1}^T \Psi\left(\frac{X_t-\mu}{\hat{\sigma}} \right)=0
\end{align} 
where $\Psi$ is the derivative of $\rho.$
So if we choose $\hat{\mu}$ as M-estimator defined by $\Psi$ in (\ref{trafo}), (\ref{quadrobcusum}) is the square (or quadratic form) of the cusum statistic of M-residuals. Looking at univariate M-residuals for change point testing has already been proposed in \cite{huvskova2005bootstrap} and \cite{huvskova2012m} but without the beneficial scale standardization. Multiple change point detection under iid noise using M residuals (also without standardization) is considered in \cite{fearnhead2017changepoint}\\
To detect changes in the dependence structure of a multivariate time series \cite{vogel2015robust} basically propose (\ref{quadrobcusum}) with $\Psi(\boldsymbol{x})=s(\boldsymbol{x})s(\boldsymbol{x})^T$ but do not consider a standardization by location and scale.\\
The rest of the article is structured as follows. Section \ref{techassum} consists of the technical assumptions and the theoretical results, while Section \ref{app} contains some tests based on specific $\Psi$-functions and their performance in small simulation studies. All proofs can be found in the appendix.

\section{Theoretical results}
\label{techassum}
In this section we give theoretical justification of the asymptotic critical values and compile all conditions which are necessary for the asymptotical results. We start by defining the type of short range dependence we impose on the time series.
We assume that it is strongly mixing.
\begin{assumption}
	Let $(X_t)_{t\in \mathbb{N}}$ be strongly mixing with mixing constants $(a_k)_{k \in \mathbb{N}}$ fulfilling \\
	$\sum_{k=1}^\infty k^2 a_k <\infty.$   
\end{assumption}
\begin{remark}
Linear and GARCH processes are strongly mixing if the innovations posses a Lebesgue density \citep{chanda1974strong,lindner2009stationarity}. For an overview on different concepts of mixing and its properties see \cite{bradley2005basic}.
\end{remark}
Next we take a closer look at the transformation $\Psi:\mathbb{R}^p\rightarrow \mathbb{R}^s.$ In the following $D_{\boldsymbol{x}}\in \mathbb{R}^{p\times p}$ denotes the diagonal matrix with the elements of $\boldsymbol{x}$ on its diagonal. Besides the boundedness of $\Psi$ we demand that the map is not too "irregular" and do not lead to a degenerate long run covariance matrix:
\begin{assumption}\label{Grundassum}
Let $\Psi:\mathbb{R}^p\rightarrow \mathbb{R}^s$ be a function fulfilling:
\begin{enumerate}[a)]
	\item there exists $C \in \mathbb{R}$ such that $|\Psi(\boldsymbol{x})|_{\infty}<C~\forall \boldsymbol{x} \in \mathbb{R},$
	\item $\mbox{det}(U)>0$ where $$U=\mbox{Var}(\Psi\{D_{\boldsymbol{\sigma}}^{-1}(\boldsymbol{X}_1-\boldsymbol{\mu})\})+2\sum_{h=1}^\infty\mbox{Cov}(\Psi\{D_{\boldsymbol{\sigma}}^{-1}(\boldsymbol{X}_1-\boldsymbol{\mu})\},\Psi\{D_{\boldsymbol{\sigma}}^{-1}(\boldsymbol{X}_{1+h}-\boldsymbol{\mu})\})$$
	\item Every component of $\Psi$ is two times continuous differentiable in $\mathbb{R}\backslash D$ and there exists $C_1,C_2>0$ with $|{\Psi^{(i)}}'(\boldsymbol{x})^T\boldsymbol{x}|\leq C_1$ and $\boldsymbol{x}^T{\Psi^{(i)}}''(\boldsymbol{x})\boldsymbol{x}\leq C_2,~\forall \boldsymbol{x}\in \mathbb{R}^p\backslash C$ and $i=1,\ldots,s$
	\item The exemption set $D$ can have the following properties:
	\begin{enumerate}[i)]
	\item It can contain balls $A_{\epsilon}(a_1),\ldots A_{\epsilon}(a_n)$ with radius $\epsilon>0$ around finitely many singularities $a_1,\ldots,a_k$ such that there exists $C_3>0$ and $\delta>0$ with $\sup_{T}\sup_{\epsilon\geq |\boldsymbol{x}-a_j|\geq T^{-2p}}|{\Psi^{(i)}}'(\boldsymbol{x})| \leq C_3 T^{p-\delta}$ and $\sup_{T}\sup_{\epsilon \geq |\boldsymbol{x}-a_j|\geq T^{-2p}}|{\Psi^{(i)}}''(\boldsymbol{x})| \leq C_3 T^{2p-\delta}$ for $j=1,\ldots,k$ and $i=1,\ldots,p.$
	\item If $\Psi$ is Lipschitz continuous, it can contain a bounded hypersurface $B$ where $\Psi$ is not differentiable.
	\item If $\Psi$ is Lipschitz continuous and fulfils the following condition: $\exists K>0$ such that for arbitrary but fixed $x_1,\ldots,x_{k-1},x_{k+1},\ldots,x_p$ 
	\begin{align*}\Psi(x_1,\ldots,x_{k-1},a,x_{k+1},\ldots,x_p)=\Psi(x_1,\ldots,x_{k-1},b,x_{k+1},\ldots,x_p),~\forall a,b>K,\end{align*}
than $D$ can contain unbounded sets $B_1,\ldots,B_l$ where $B_i$ are hyperplanes of the form $B_i=\{(x_1,\ldots,x_{m-1},a_i,x_{m+1},\ldots,x_p):x_1,\ldots,x_{k-1},x_{k+1},\ldots,x_p\in \mathbb{R}\}$ where $\Psi$ is not differentiable.
	\item It can contain unbounded sets $E_1,\ldots,E_r$ where $E_i$ are hyperplanes of the form $E_i=\{(x_1,\ldots,x_{m-1},a_i,x_{m+1},\ldots,x_p):x_1,\ldots,x_{k-1},x_{k+1},\ldots,x_p\in \mathbb{R}\}$ where $\Psi$ is discontinuous as long as $\Psi'(\boldsymbol{x})=0$ for $\boldsymbol{x}\in \mathbb{R}^p\backslash (E_1\cup\ldots\cup E_r)$.
	\end{enumerate}
\end{enumerate}
\end{assumption}
\begin{remark}
\begin{itemize}
\item In Assumption $a)$ we actually need only finite essential supremum and infimum of $\Psi$. One can even drop the boundedness condition completely and demand finite moments and a faster decrease of the mixing constants $(a_k)_{k\in \mathbb{N}}$. However boundedness is a necessary assumption for robustness.
\item Assumption $b)$ guarantees that we do not have a degenerate limit process which could be a result of a degenerate $(\boldsymbol{X}_t)_{t\in \mathbb{R}}$ or an improper choice of $\Psi$, $\hat{\boldsymbol{\mu}}$ or $\hat{\boldsymbol{\sigma}}.$
\item If one estimates $\boldsymbol{\mu}$ and $\boldsymbol{\sigma}$ one needs a type of continuity of $\Psi$. In the more specific situation of $M$-residuals without scale standardization \cite{huvskova2012m} assume Lipschitz continuity of the derivative of $\Psi$ and  Lipschitz continuity of some $2+\delta$ moment. We assume two times differentiability at almost all points but allow for exemption sets which enables the use of discontinuous $\Psi$.
\item The multivariate sign function $\Psi_s(\boldsymbol{x})$ has a singularity in $0$. Allowing for the exemption points $a_1,\ldots,a_k$ in d) i) enables the use this function and also the spatial sign cross product $\Psi_{SCov}(\boldsymbol{x}).$
\item The multivariate Huber function $\Psi_H(\boldsymbol{x})$ is not differentiable at $\{\boldsymbol{x}\in \mathbb{R}:|\boldsymbol{x}|=k\}.$ This exemption set is allowed because of d) ii) which also enables the use of the Huber cross product $\Psi_{HCov}(\boldsymbol{x})$
\item The marginal Huber functions defined by $\Psi(\boldsymbol{x})^{(i)}=x^{(i)}I_{|x^{(i)}|\leq k}+k\cdot s(x^{(i)})I_{|x^{(i)}|> k}$ for $i=1,\ldots,p$ are allowed due to d) iii) likewise marginal Huber cross products.
\item Marginal signs are allowed due to d) iv).
\end{itemize}
\end{remark}
Furthermore we need that both the location and the scale estimator be $\sqrt{T}$ consistent for their theoretical counterparts.
\begin{assumption}
Let $(\boldsymbol{\mu}_T)_{T\in \mathbb{N}}$ and $(\boldsymbol{\sigma}_T)_{T\in \mathbb{N}}$ be two stochastic sequences fulfilling
$$\boldsymbol{\mu}_T-\boldsymbol{\mu}=O(T^{-\frac{1}{2}})~~\mbox{and}~~
\boldsymbol{\sigma}_T-\boldsymbol{\sigma}=O(T^{-\frac{1}{2}}).$$
\end{assumption}
\begin{remark}
Assumption 3 is rather weak. We actually do not need to know the population values $\boldsymbol{\mu}$ and $\boldsymbol{\sigma}$ and in practice we rarely will. This would require knowledge of the marginal distribution of $\boldsymbol{X}_1.$ Consistency of quantile based estimators like median and MAD follows from the existence of a Bahadur representation which was shown under strong mixing and continuous density of the innovations in \cite{yoshihara1995bahadur}.  
\end{remark}
Finally we need assumptions for the kernel estimator. They are nearly identical with these in \cite{jong2000consistency}:
\begin{assumption}\label{assukern}
Let $k:\mathbb{R}\rightarrow [-1,1]$ be a function which is continuous in 0 and has only finitely many discontinuities. Furthermore it fulfills $k(0)=1,$ $k(x)=k(-x)$ for $x\in \mathbb{R}$ and $\int_{-\infty}^\infty |k(x)|~dx<\infty$ as well as $\int_{-\infty}^{\infty}|\int_{-\infty}^{\infty}k(x)e^{-2\pi x\xi}~dx|~d\xi<\infty.$ For the bandwidth $b_T$ it holds that: $b_T\rightarrow \infty$ and $b_T/T^{1-\epsilon}\rightarrow 0$ for $T\rightarrow \infty$ and some $\epsilon>0$
\end{assumption}
The next theorem contains the main result of this paper and describes the asymptotic distribution of $(W_T(x)^2)_{x\in [0,1]}$ under the null hypothesis.
\begin{theorem}\label{haupt}
	Let Assumptions 1-3 hold then
	\begin{align*}
	(W_T(x)^2)_{x\in [0,1]}\stackrel{w}{\rightarrow} (\sum_{i=1}^p BB_i(x)^2)_{x\in[0,1]}
	\end{align*}
	where $(BB_i(x))_{x\in [0,1]},~i=1,\ldots,p$ are mutually independent standard Brownian Bridges.
\end{theorem}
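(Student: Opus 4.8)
The plan is to reduce the assertion to a functional central limit theorem for the partial sums of the transformed variables under the \emph{population} standardisation, to absorb the error caused by using $\hat{\boldsymbol{\mu}},\hat{\boldsymbol{\sigma}}$ instead of $\boldsymbol{\mu},\boldsymbol{\sigma}$ into an asymptotically negligible remainder, to establish consistency of the kernel estimator $\hat{U}$, and to conclude by the continuous mapping theorem. Write $s$ for the dimension of the range of $\Psi$ (this is the quantity appearing as $p$ in the statement) and set $\boldsymbol{Y}_i=\Psi(D_{\boldsymbol{\sigma}}^{-1}(\boldsymbol{X}_i-\boldsymbol{\mu}))$. Note first that the bracketed vector in the numerator of (\ref{quadrobcusum}) is unchanged when a constant is added to every $\boldsymbol{Y}_{i,T}$, so under $H_0$ the constant mean $E\boldsymbol{Y}_1$ drops out and it suffices to work with centred processes throughout.

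The first main ingredient is the FCLT for the population-standardised process. Since $\Psi$ is bounded and measurable, $(\boldsymbol{Y}_i)$ is a stationary, bounded, strongly mixing sequence with the same mixing coefficients $(a_k)$ as $(\boldsymbol{X}_i)$. Assumption 1 (which in particular gives $\sum_k a_k<\infty$) together with $\det U>0$ from Assumption 2 b) then yield, via the Cram\'er--Wold device, the invariance principle for bounded strongly mixing sequences, and a covariance inequality of Davydov type identifying the limiting covariance as $U$, that
\begin{align*}
\Bigl(T^{-1/2}\sum_{i=1}^{\lfloor Tx\rfloor}(\boldsymbol{Y}_i-E\boldsymbol{Y}_1)\Bigr)_{x\in[0,1]}\stackrel{w}{\to}\bigl(U^{1/2}\boldsymbol{W}(x)\bigr)_{x\in[0,1]}
\end{align*}
in $D([0,1])^s$ for an $s$-dimensional standard Brownian motion $\boldsymbol{W}$; subtracting the drift, the corresponding centred cusum process converges to $(U^{1/2}\boldsymbol{B}(x))_{x\in[0,1]}$ with $\boldsymbol{B}(x)=\boldsymbol{W}(x)-x\boldsymbol{W}(1)$ a vector of mutually independent standard Brownian bridges.

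The second step is to show that replacing $\boldsymbol{Y}_i$ by $\boldsymbol{Y}_{i,T}$ is harmless, i.e.
\begin{align*}
\sup_{x\in[0,1]}\Bigl|T^{-1/2}\Bigl(\sum_{i=1}^{\lfloor Tx\rfloor}(\boldsymbol{Y}_{i,T}-\boldsymbol{Y}_i)-\tfrac{\lfloor Tx\rfloor}{T}\sum_{i=1}^T(\boldsymbol{Y}_{i,T}-\boldsymbol{Y}_i)\Bigr)\Bigr|=o_P(1).
\end{align*}
For the observations whose standardised value -- the estimated as well as the true one -- stays away from the exemption set $D$, a second-order Taylor expansion in $\boldsymbol{\theta}=(\boldsymbol{\mu},\boldsymbol{\sigma})$ applies; here Assumption 2 c) is what keeps the relevant derivative expressions bounded, since by the chain rule applied to $D_{\boldsymbol{\sigma}}^{-1}(\boldsymbol{X}_i-\boldsymbol{\mu})$ they enter through the radial combinations ${\Psi^{(i)}}'(\boldsymbol{x})^T\boldsymbol{x}$ and $\boldsymbol{x}^T{\Psi^{(i)}}''(\boldsymbol{x})\boldsymbol{x}$, so that $\boldsymbol{Y}_{i,T}-\boldsymbol{Y}_i=A(\boldsymbol{X}_i)(\hat{\boldsymbol{\theta}}-\boldsymbol{\theta})+\tfrac12(\hat{\boldsymbol{\theta}}-\boldsymbol{\theta})^TB(\boldsymbol{X}_i)(\hat{\boldsymbol{\theta}}-\boldsymbol{\theta})$ with $A(\boldsymbol{X}_i),B(\boldsymbol{X}_i)$ bounded. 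The contribution of the linear term to the cusum factorises as the $T^{-1/2}$-scaled cusum of the bounded mixing matrices $A(\boldsymbol{X}_i)$ -- which is $O_P(1)$ uniformly in $x$, being tight by the invariance principle -- times $\hat{\boldsymbol{\theta}}-\boldsymbol{\theta}=O_P(T^{-1/2})$ from Assumption 3, hence is $o_P(1)$; the quadratic term is $O_P(T^{1/2})\,O_P(T^{-1})$, also $o_P(1)$; both uniformly in $x$. The remaining observations, whose standardised value lies in a neighbourhood of $D$, are handled following the four cases of Assumption 2 d): around an isolated singularity one combines a truncation -- the very few observations in a minuscule excised ball contribute $o_P(1)$ just by the boundedness of $\Psi$ -- with the admitted polynomial growth bounds on $\Psi',\Psi''$ on the surrounding annulus, played off against the $O_P(T^{-1/2})$ parameter error and a bound on the random number of observations in the annulus; on a bounded Lipschitz hypersurface the global Lipschitz property bounds $|\boldsymbol{Y}_{i,T}-\boldsymbol{Y}_i|$ directly, without a Taylor expansion; and on the unbounded hyperplanes of d) iii)--iv) the structural restrictions imposed there ($\Psi$ eventually constant in the offending coordinate, respectively $\Psi'\equiv 0$ off the hyperplanes) make the crossing contributions vanish. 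Summing these negligible pieces and taking the supremum over $x$ proves the display, so the centred cusum process built from $(\boldsymbol{Y}_{i,T})$ inherits the weak limit $(U^{1/2}\boldsymbol{B}(x))_{x\in[0,1]}$. This second step is the one I expect to be the main obstacle: away from $D$ it is a routine Taylor argument, but the bookkeeping near $D$ -- matching the shrinking excision radius to the growth of $\Psi',\Psi''$ and to the $O_P(T^{-1/2})$ size of $\hat{\boldsymbol{\theta}}-\boldsymbol{\theta}$, and controlling how many observations sit near a thin or an unbounded ``bad'' set -- is exactly where all of Assumption 2 c), d) is spent and where the technical weight lies.

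Finally, Assumption 4 (its mixing-rate requirement being implied by Assumption 1) ensures that the flat-top kernel $k_F$ meets the conditions of \cite{jong2000consistency}; applying their consistency result to the bounded mixing array $(\boldsymbol{Y}_{i,T})$, with the passage from $\boldsymbol{Y}_i$ to $\boldsymbol{Y}_{i,T}$ absorbed by a perturbation argument that uses the uniform smallness of the increments established above, gives $\hat{U}\stackrel{P}{\to}U$, hence $\hat{U}^{-1}\stackrel{P}{\to}U^{-1}$ on the event $\{\det\hat{U}>0\}$, which has probability tending to $1$ since $\det U>0$. Because the map $(f,M)\mapsto(x\mapsto f(x)^TMf(x))$ from $D([0,1])^s\times\mathbb{R}^{s\times s}$ to $D([0,1])$ is continuous at pairs with continuous $f$, the continuous mapping theorem together with Slutsky's lemma gives
\begin{align*}
\bigl(W_T^2(x)\bigr)_{x\in[0,1]}\stackrel{w}{\to}\bigl(\boldsymbol{B}(x)^TU^{1/2}U^{-1}U^{1/2}\boldsymbol{B}(x)\bigr)_{x\in[0,1]}=\Bigl(\sum_{i=1}^s B_i(x)^2\Bigr)_{x\in[0,1]},
\end{align*}
the squared Euclidean norm of a vector of $s$ mutually independent standard Brownian bridges, i.e.\ the asserted Bessel-bridge limit.
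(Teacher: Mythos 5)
Your proposal follows essentially the same route as the paper's proof: an invariance principle for the bounded, strongly mixing transformed sequence under the population standardisation (via a Cram\'er--Wold device), asymptotic negligibility of the plug-in standardisation obtained by a Taylor expansion off the exemption set $D$ (exploiting the radial derivative bounds of Assumption 2 c)) together with a case-by-case treatment of the sets admitted in Assumption 2 d), consistency of $\hat{U}$ from \cite{jong2000consistency} plus a perturbation argument, and the continuous mapping theorem. The only notable difference is minor: you obtain uniformity in $x$ by factoring $\hat{\boldsymbol{\theta}}-\boldsymbol{\theta}$ out of the linear Taylor term and bounding the exemption-set contributions by full sums of absolute values, whereas the paper establishes pointwise $L^2$-negligibility and then tightness separately via Billingsley's fourth-moment criterion.
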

The result concerns weak convergence in the Skorochod space $D[0,1]$ which consists of all functions which are right continuous with left-hand limits. The continuous mapping theorem therefore yields the validity of the proposed asymptotic critical values.\\
The linear structure of the test statistic makes it feasible to derive also asymptotics under the alternative. Nevertheless there are two challenges. The first one is the estimation of the location and scale standardization and the second is the estimation of the long run covariance. Especially if $s>1$ this gets tricky since we need to ensure positive definiteness of $V$ which depends on the direction of the alternative. If one looks at local alternatives this posses less problems, nevertheless we postpone the theory under alternatives to future work and investigate the properties of our approach by simulations.

\section{Application}\label{app}
Theorem \ref{haupt} enables various kinds of change point tests. As mentioned in the introduction one can detect amongst others changes in location scale and dependence. If one does not assume a specific model one will usually use truncated moments like in Table \ref{examplepsi}. Otherwise truncated scores of the log-likelihood can e used. For example, if $X_{1},\ldots,X_{T}$ is assumed to be a sequence of independent exponentially distributed random variables under the null hypothesis, one can use
\begin{align*}
\Psi(X_i)=\begin{cases}
\frac{\ln(2)X_i}{\hat \sigma},& \frac{\ln(2)X_i}{\hat \sigma}\leq k\\
k,& \frac{\ln(2)X_i}{\hat \sigma}>k
\end{cases}
\end{align*}
where $\hat{\sigma}=\mbox{Median}(X_1,\ldots,X_T).$ Note that we do not center the observations here and drop the term $-\frac{\log(2)}{\hat{\sigma}}$ since it does not change the cusum statistic. Even if the model is incorrectly specified, the observations are serially correlated or follow a different distribution the test is still valid with respect to the asymptotic size under the null hypothesis of no change but will lose power under the alternative.\\
In the following we present some useful non-parametric tests. If one tries to detect a change in location and $p=1$ we recommend the Huber-$\Psi$-function
\begin{align*}
\Psi_H(x)=\begin{cases}
x,&|x|\leq k\\
ks(x),&|x|>k
\end{cases},
\end{align*}
originally proposed for location estimation in \cite{huber1964robust}. Different authors propose different choices of $k$ in the estimation context. In \cite{huber2011robust} $k\in [1,2]$ is recommended and $k=1.5$ suggested, but there are also other proposals favour $k=1.2$ \citep{cantoni2001robust}, \cite{street1988note} $k=1.25$ and \cite{wang2007robust}  propose a data dependent $k.$ In general a larger value of $k$ is more efficient under normality but less efficient under heavy tails and outliers. We investigate the impact of $k$ in the related problem in detecting a scale or scatter shift.\\
There are two straightforward generalizations for $p>1.$ One can use $p$ univariate Huber-M-$\Psi$-functions 
\begin{align*}\tilde{\Psi}_H(\boldsymbol{x})=(\Psi_H(x)^{(1)},\ldots,\Psi_H^{(p)}(x))'
\end{align*} ore a multivariate one
\begin{align*}
\Psi_H(\boldsymbol{x})=\begin{cases}\boldsymbol{x},& |\boldsymbol{x}|\leq k\\
		s(\boldsymbol{x})k,&|\boldsymbol{x}|> k\end{cases}.
\end{align*}
One expects that the later version is more powerful but less robust in case of elliptical marginal distributions as this applies for the estimation problem \citep{maronna1976robust}. If $p$ is large compared to $T,$ tests based on projections might be preferable though the power of the test crucially depends on the  direction $\mathbf{a}\in \mathbb{R}^p$ chosen for the projection. The choice
\begin{align*}
\Psi_P(\boldsymbol{x})=(\Psi_{H}(x)^{(1)},\ldots,\Psi_{H}^{(p)}(x))\boldsymbol{a}
\end{align*}  
gives a robustified projection based test.\\
The sample variance and covariance are due to their quadratic nature even more influenced by outliers than the arithmetic mean. For symmetric distributions
\begin{align*}
\Psi_{HVar}(x)=\begin{cases}
x^2,&|x|\leq k\\
k^2,&|x|>k
\end{cases}
\end{align*}
is an intuitive choice to find a change in the scale of time series with rather symmetric marginal distributions, but there are many other possibilities. One can of course also look at Huberized absolute values or any power of it. If the marginal distribution is very heavy skewed the test is still valid but not optimal. If the distribution is for example positively skewed the $\Psi$-function will downweight many valuable observations on the right tail and on the other hand overlook outliers on the left tail. If the time series is additionally positive $\Psi_{LogHVar}(x)=\Psi_{HVar}{(log(x))}$ might be a suitable choice, since it can symmetrize the marginal distribution.\\
Like in the location case there are at least two intuitive generalizations for $p>1$. The first one Huberizes observations in every component individually
\begin{align*}
\tilde{\Psi}_{HCov}^{i,j}(\boldsymbol{x})=\Psi_{HVar}(x^{(i)})\Psi_{HVar}(x^{(j)}),
\end{align*}
while the second one Huberizes all components at once
\begin{align*}
\Psi_{HCov}(\boldsymbol{x})=\begin{cases}\boldsymbol{x}\boldsymbol{x}^T& |\boldsymbol{x}|\leq k\\
		k^2s(\boldsymbol{x})s(\boldsymbol{x})^T&|\boldsymbol{x}|> k\end{cases}.
\end{align*}
Depending on the outlier model one proposal is probably more favourable than the other. The first one is related to the scatter estimator proposed in \cite{van2016stahel}, which is constructed to cope with cellwise outliers. In this setting the elements of an observation vector are corrupted individually. The second approach is similar to scatter estimators recently proposed in \cite{raymaekers2018generalized} which show good results in case of rowwise outliers, where whole observations are corrupted. If an observation $\boldsymbol{X}_{t^\star}$ is corrupted only in a few components one should apply $\tilde{\Psi}_{HCov}$, while if one expects only heavy tailed observations or rowwise no outliers then $\Psi_{HCov}(\boldsymbol{x})$ should be used.\\
For transformations measuring dependence we need additional operators, since assumption 2b) is violated by default otherwise. If one simply vectorizes a symmetric matrix one gets duplicated elements and therefore perfectly linear dependence which lead to a singular $V.$ We define $DU:\mathbb{R}^{p\times p}\rightarrow \mathbb{R}^{p(p+1)/2}$ with $DU(M)=(M_{1,1},M_{2,1},\ldots,M_{p,1},M_{2,2},\ldots,M_{2,p},\ldots,M_{p,p})^T$ which extracts the diagonal and lower diagonal elements of a matrix. Then also the Huberized covariance transformation $DU(\Psi(x))$ with
$\Psi_{HCov}(\boldsymbol{x})$ fulfills Assumption 2. For the spatial sign covariance matrix we need $dU:\mathbb{R}^{p\times p}\rightarrow \mathbb{R}^{p(p+1)-1}$ with $dU(M)=(M_{1,1},M_{2,1},\ldots,M_{p,1},M_{2,2},\ldots,M_{2,p},\ldots,M_{p,p-1})^T$ which eaves the last diagonal element out as opposed to $DU$
\section{Simulations}\label{simu}
We want to evaluate advantages and disadvantages of the proposed approach in some simulations. In the following we concentrate on the case of a change in the variance ($p=1$) respectively covariance ($p>2$) of a possibly multivariate time series. If one is interested in a change in location we refer to \cite{dehling2015robust} which includes extensive simulations. Our approach is called Huberization test there and is quite competitive, though it is beaten by the two sample Hodges-Lehmann test proposed there. However, there are two advantages which might counterbalance a slight disadvantage with respect to power. First, our procedure has a computational complexity of $T\log (T)$ whereas for the current implementation of the Hodges-Lehmann based test it is of $T^3$ prohibiting its application to very large samples and second, our approach based on bounded transformations has a natural extension to the case $p>1.$ Under elliptical distributions it is known that estimators and tests based on the multivariate sign function get more efficient with increasing dimension \citep{paindaveine2016high} and this is also observed in the change point context \citep{vogel2015robust}.
\subsection{Change in marginal scale of a one dimensional time series}
We first want to take a closer look at the case $p=1$ and a change in the scale of a one dimensional time series. There are some tests in the literaturefor this situation which are robust to some degree or appropriate under heavy tails, see \cite{gerstenberger2016tests}. All three of them are cusum type tests based on different estimators of scale, using
\begin{align*}
\max_{k=1,\ldots,n}\frac{k}{\sqrt{n\hat{v}}}|s_{1:k}-s_{1:n}|
\end{align*}
where $s_{1:k}$ is a scale estimator based on $X_1,\ldots,X_k$ and $\hat{v}$ is an estimator of the asymptotic variance of $s_{1:n}.$ In \cite{gerstenberger2016tests} three estimators are investigated. The mean absolute deviation (abbreviated as MD) is defined as
\begin{align*}
s_{1:n}=\frac{1}{n}\sum_{i=1}^n|x_i-q_{0.5}(X)|
\end{align*}
where $q_\alpha(X)$ denotes the $\alpha-$Quantile of $X_1,\ldots,X_n.$ The MD has an asymptotic efficiency of 0.88 compared to the standard deviation under independent and identically normal distributed data \citep{fisher1921}, but its breakdown point is 0. Ginis mean difference (GMD) is a $U-$statistic of the following form
\begin{align*}
s_{1:n}=\frac{2}{n(n-1)}\sum_{1\leq i<j\leq n}^n|X_i-X_j|.
\end{align*} 
It has an efficiency of 0.98 \citep{gerstenberger2015efficiency} and also a breakdown point of 0. The quantile of pairwise differences ($Q_n^{(\alpha)}$) is a U-Quantile defined as
\begin{align*}
Q_n^{(\alpha)}=q_{\alpha}(|X_i-X_j|:1\leq i<j\leq n).
\end{align*}
To get a robust estimator \cite{RousseeuwCroux1993} propose $\alpha={\lfloor n/2 \rfloor +1  \choose 2}\approx 0.25$ which yields an optimal breakdown point of 0.5 and an efficiency of 0.82. However, \cite{gerstenberger2016tests} explore that $\alpha=0.8$ is more appropriate in the change point setting. This yields an efficiency of 0.96 and a breakdown point of 0.08.\\
Our test is based on the psi-function $\Psi_{HCov}.$ It remains to choose a suitable value of $k.$ A larger value transforms less values and leads to more powerful tests under Gaussianity whereas a smaller $k$ is more efficient under heavy tails. In the following we will choose $k$ as a quantile of the $\chi^2_1$ distribution (with one degree of freedom). So $k=q_{\chi^2_1}(0.5)\approx 0.45$ means that under independent normally distributed data approximately 50\% of the observations is transformed. We choose the 0.5, 0.8 and 0.95 quantile and denote the resulting estimators as $M_{0.5},~M_{0.8}$ respectively $M_{0.95}.$\\
In our simulations we concentrate on ARCH models of order one:
\begin{align}\label{ARCH}
X_t=\sigma_t\epsilon_t,~~~t=1,\ldots,T
\end{align}
where $(\epsilon_t)_{t\in \mathbb{Z}}$ is a series of iid Gaussian random variables and $\sigma_t^2=\pi_0+\pi_1 X_{t-1}^2.$ This comparatively simple model allows us to investigate the effect of heavy tails as well as serial correlation. A large value of $\pi_1$ yields a strong dependence and heavy tails, while $\pi_1=0$ produces an iid Gaussian time series.\\
There are some specific tests to detect changes in ARCH models. We included some of these in our simulations. The first proposal is by \cite{kokoszka2002change} who propose a test on estimated GARCH residuals $\hat{\epsilon}_1,\ldots,\hat{\epsilon}_T.$ We abbreviate it as GARCH-res-ecdf. Denote $\hat{F}_{1:k}$ the empirical distribution function of $\hat{\epsilon}_1^2,\ldots,\hat{\epsilon}_k^2$ and by $\hat{F}_{k+1:n}$ the one based on $\hat{\epsilon}_1^2,\ldots,\hat{\epsilon}_k^2.$ Then the test statistic is computed as
\begin{align*}
B_T=\frac{1}{T^2}\sum_{i=1}^T\sum_{j=1}^T\left(\sqrt{T}\frac{i}{T}\left[1-\frac{i}{T}\right]|\hat{F}_{1:i}[\hat{\epsilon}_j]-\hat{F}_{i+1:T}[\hat{\epsilon}_j]|\right)^2.
\end{align*}
Asymptotical critical values of $B_T$ can be found in \cite{kokoszka2002change} which date back to \cite{blum1961distribution}.\\
Also residual based is the approach of \cite{kulperger2005high} which uses 
\begin{align*}
D_T=\max_{t=1,\ldots,T}\frac{|\sum_{i=1}^t(\hat{\epsilon}_i- \overline{\hat{\epsilon}})^2-i\hat{\sigma}^2|}{\sqrt{n}\hat{v}},
\end{align*}
where 
\begin{align*}\overline{\hat{\epsilon}}=\frac{1}{n}\sum_{t=1}^T \hat{\epsilon}_t,~~
\hat{\sigma}^2=\frac{1}{n}\sum_{t=1}^n(\hat{\epsilon}_t-\overline{\hat{\epsilon}})^2~~\mbox{and}~~
\hat{v}^2=\frac{1}{n}\sum_{t=1}^T[(\hat{\epsilon}_t-\overline{\hat{\epsilon}})^2-\hat{\sigma}^2]^2.
\end{align*}
The cusum-type statistic $D_T$ is asymptotically distributed like the maximum of the absolute value of a Brownian bridge. We abbreviate this test by GARCH-res.\\
There are two more tests based on the likelihood function. Both require the parameter $\pi_1$ to be strictly larger than 0. The first one is by \cite{berkes2004testing} who propose using functionals of
\begin{align}\label{Garchmul}
E(t)=\frac{1}{T}\left[\sum_{i=1}^{\lfloor nt \rfloor} \hat{l}_i'(\hat{\boldsymbol{\pi}}) \right]\hat{D}^{-1}\left[\sum_{i=1}^{\lfloor nt \rfloor} \hat{l}_i'(\hat{\boldsymbol{\pi}}) \right]^T.
\end{align}
Thereby $l_i'$ denotes the estimated score. If one assumes normally distributed errors $\epsilon_t,~t=1,\ldots,T$ for the stated model (\ref{ARCH}) it equals
\begin{align*}
l_i'(\hat{\boldsymbol{\pi}})=\left(\frac{X_i^2}{2(\hat{\pi}_0+\hat{\pi}_1X_{i-1})^2}-\frac{1}{2(\hat{\pi}_0+\hat{\pi}_1X_{i-1})},\frac{X_i^2X_{i-1}^2}{2(\hat{\pi}_0+\hat{\pi}_1X_{i-1})^2}-\frac{X_{i-1}^2}{2(\hat{\pi}_0+\hat{\pi}_1X_{i-1})}\right).
\end{align*}
and $D$ can be estimated by $\frac{1}{T}\sum_{t=1}^T \hat{l}_i'(\hat{\boldsymbol{\pi}})^T\hat{l}_i'(\hat{\boldsymbol{\pi}}).$ It is shown that $(E(t))_{t\in [0,1]}$ converges to a squared Bessel bridge with parameter 2. To simplify the computational complexity we use the maximum of $E_t$. We call the Test GARCH-LM3 in the following.\\
A Lagrange multiplier test which enables detecting changes of single parameters is proposed by \cite{galeano2009shifts} and abbreviated as GARCH-LM1. The test statistic reads
\begin{align*}
LM_T=\max_{t_{min} \leq t \leq t_{max}} \frac{T}{t(1-t)}\frac{[\overline{l}_tB_T(\hat{\boldsymbol{\pi}})A_T(\hat{\boldsymbol{\pi}})_{[2]}]^2}{A_T(\hat{\boldsymbol{\pi}})_{[2]}B_T(\hat{\boldsymbol{\pi}})^{-1}A_T(\hat{\boldsymbol{\pi}})_{[2]}^T}.
\end{align*} 
where $\overline{l}_t=\frac{1}{T}\sum_{i=1}^t l_i(\hat{\boldsymbol{\theta}})$. In our model (\ref{ARCH}) the scores $l_i(\hat{\boldsymbol{\theta}})$ have the following form
\begin{align*}
l_i(\hat{\boldsymbol{\theta}})=\frac{X_i-\hat{\mu}}{\hat{\pi}_0+\hat{\pi}_1(X_{i-1}-\hat{\mu})^2}(1,0,0)^T+\frac{1}{2}\left(\frac{(X_i-\hat{\mu})^2-\hat{\pi}_0-\hat{\pi}_1(X_{i-1}-\hat{\mu})^2}{[\hat{\pi}_0+\hat{\pi}_1(X_{i-1}-\hat{\mu})^2]^2}\right)h'_i(\hat{\boldsymbol{\theta}})^T
\end{align*}
with
\begin{align*}
h'_i(\hat{\boldsymbol{\theta}})=[-2(X_{i-1}-\hat{\mu}),1,(X_{i-1}-\hat{\mu})^2].
\end{align*}
Furthermore we have
\begin{align*}
A_T(\hat{\boldsymbol{\theta}})_{[2]}=-\frac{1}{T}\sum_{t=1}^T\left[\frac{1}{2[\hat{\pi}_0+\hat{\pi}_1(X_{i-1}-\hat{\mu})^2]^2}h'_i(\hat{\boldsymbol{\theta}})^Th'_i(\hat{\boldsymbol{\theta}})\right]_{[2]}~~\mbox{and}~~B_T=\frac{1}{T}\sum_{t=1}^Tl_i(\hat{\boldsymbol{\theta}})l_i(\hat{\boldsymbol{\theta}})^T
\end{align*}
with $(\cdot)_{[i]}$ denoting the $i$-th column of a matrix. By choosing the second column of $A_T$ we test against a change in $\pi_0$ which reflects a change in the marginal variation. In \cite{de1981crossing} the following asymptotic formula to compute p-values of $LM_T$ is derived
\begin{align*}
P(LM_T>x)\approx\frac{1}{\Gamma\left(\frac{1}{2}\right)}\sqrt{2x}e^{-\frac{x}{2}}\left[\log\left(\frac{t_{\max}}{t_{\min}}\right)\left(1-\frac{1}{x}\right)+\frac{2}{x}\right].
\end{align*}
The values $t_{\min}=0.05$ and $t_{\max}=0.95$ are chosen as boundary values.\\
For all four tests we use the R-package fGarch \citep{wuertz2009fgarch} to estimate the parameters of the ARCH(1) model.\\
We have not addressed the specific choice of the kernel and the bandwidth of the long run variance estimation yet. Assumption \ref{assukern} is very general. From the theory of spectrum estimation we know that a large choices of $b_n$ yield a large variance whereas small ones can produce a large bias. There are optimal rates depending on the time series model and plugin estimators if the model is unknown. Note that these data dependent estimators are constructed under the null hypothesis. Under the alternative they overestimate the dependence and yield very large vales of $b_n$ which seriously affect the power \citep{vogelsang1999sources}. We tried a large number of different fixed bandwidths and found that the ones in Table \ref{bandw} to be especially useful for ARCH(1) models.
\begin{table}
\begin{center}
	\begin{tabular}{l|c|c}
estimator&$b_T$&kernel\\\hline
$M_q$&$0.9T^{1/3}$&\multirow{4}{*}{$k(x)=\begin{cases}1&x\leq 0.5\\
2-2x& 0.5<x<1\end{cases}$}\\
MD&$T^{1/4}$&\\
GMD&$T^{1/4}$&\\
$Qn^{(0.8)}$&$T^{1/3}$&
	\end{tabular}
	\caption{Choosen bandwidths $b_T$ and kernels $k$ for the estimation of the long run variance.\label{bandw}}
	\end{center}
\end{table}
Note that a small bandwidth is always preferable under the alternative, since a level shift results in large estimated autocorrelations and therefore a large estimated long run variance, which enters the denominator of the test statistic. In this regard MD and GMD have a slight advantage over $M_k$ and $Qn^{(0.8)}$. The latter requires an additional tuning parameter for a necessary density estimation. We used the same as \cite{gerstenberger2016tests} but noticed that this nuisance parameter is not very influential on the power and the size of the test.\\
Finally we use the finite sample correction proposed in \cite{durre2018}, leading to 
\begin{align}\label{fini}
\sup_{x\in [0,1]}|W_T(x)|+\frac{\zeta(0.5)}{\sqrt{2\pi\cdot T}}.
\end{align}
The summand $\zeta(0.5)/\sqrt{2\pi\cdot T}\approx 0.58/\sqrt{T}$ originally results from the asymptotic expectation of 
\begin{align*}\sqrt{T}\left(\sup_{x\in [0,1]}B_x-\max_{x=0,1/T,\ldots,1} B_x\right),\end{align*}
see \cite{asmussen1995discretization}. But it turns out that this finite sample correction is also valid for the maximum of the absolute values of a Brownian bridge \citep{durre2018}. Simulations indicate that this correction is also useful for robustly transformed tests, see Figure \ref{kor}. 
\begin{figure}
\begin{center}
	\includegraphics[width=0.95\textwidth]{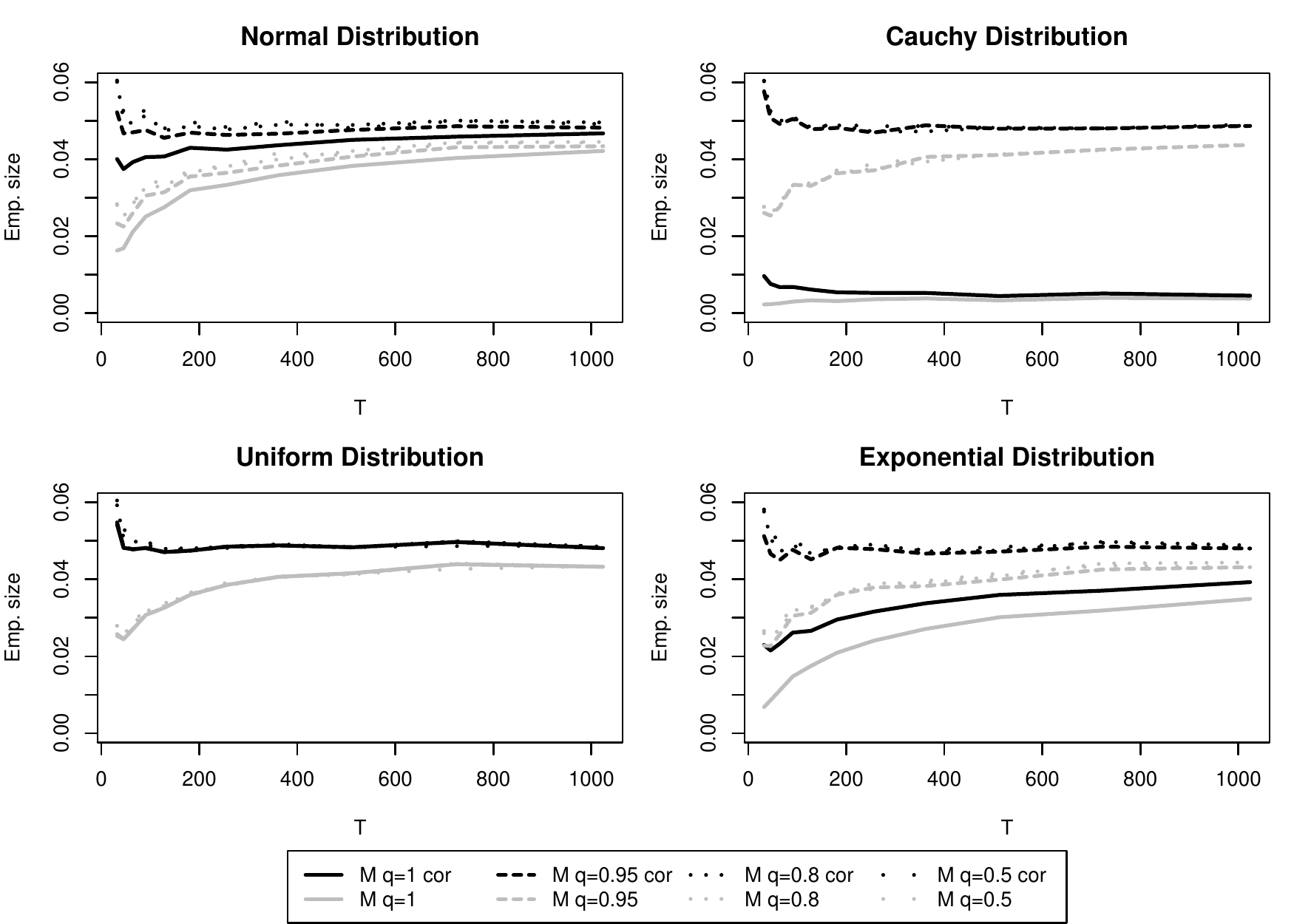}
	\caption{Simulated empirical sizes of $M_q$ tests under $X_T=\epsilon_t,~t=1,\ldots,T.$. where $\epsilon_t$ are independent and identically distributed following a standard normal distribution (top-left), a Cauchy distribution (top-right), a uniform distribution on $[0,1]$ (bottom-left) and an exponential distribution with parameter $\lambda=1$ (bottom right).\label{kor}}
	\end{center}
\end{figure}
It turns out that the correction is also useful for $MD$, $GMD$ and $Qn^{(0.8)}$, which is not surprising since the linearization of all these tests is the ordinary cusum test. Therefore we added the correction also to these tests.\\ 
First we want to asses the size under the null hypothesis.
\begin{figure}
\begin{center}
	\includegraphics[width=0.8\textwidth]{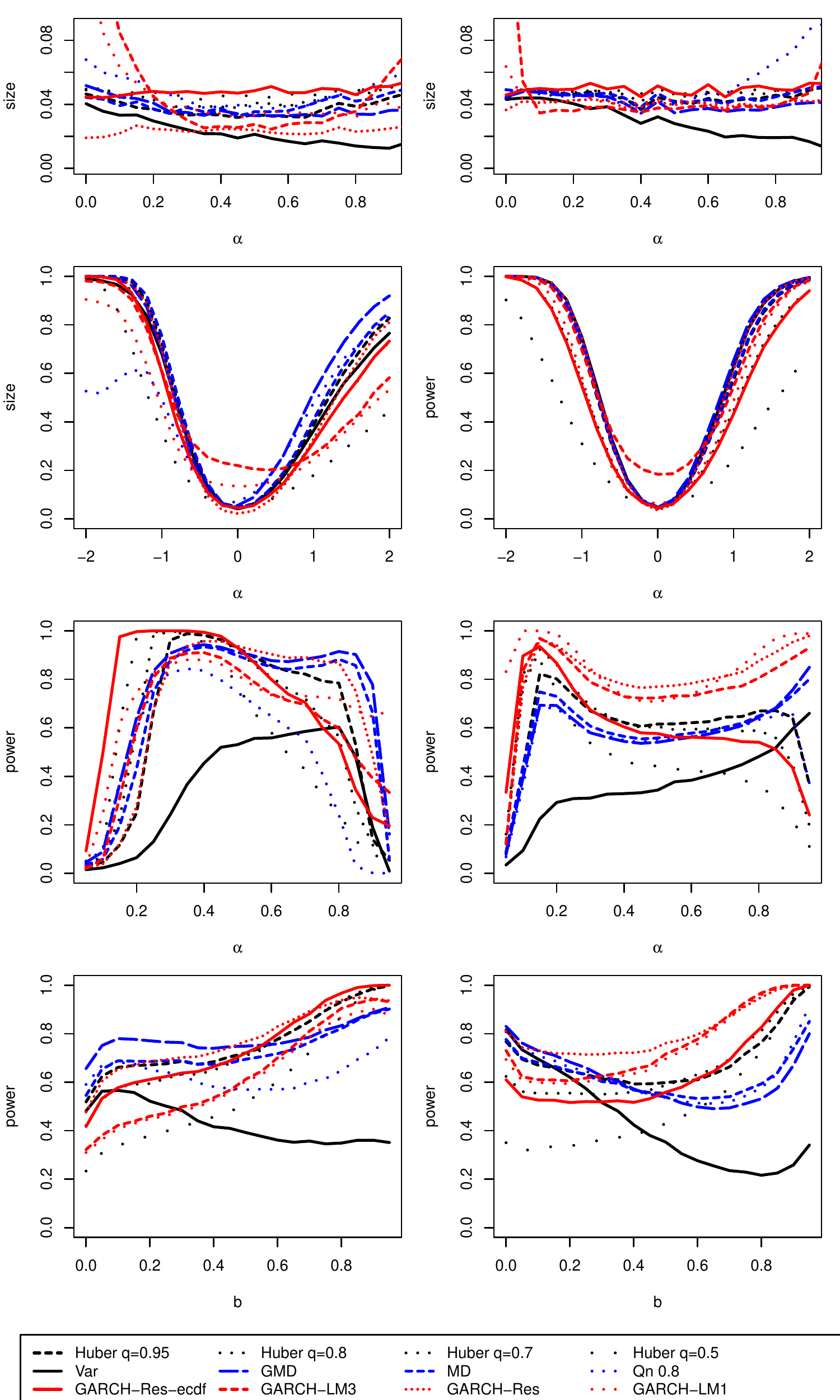}
	\caption{Simulation results under ARCH(1) model $X_t=\epsilon \sigma_t$ with $\sigma_t=\pi_0+\pi_1X_{t_1},~t=1,\ldots,T$ with $T=100$ (left) and $T=800$ (right). The first row shows the empirical size under $H_0$ with respect to different $\pi_1.$ The remaining rows show the power with respect to different variables, namely the size of the change (2nd row), the time of the change-point (3rd row) and $\pi_1$ (4th row).\label{figgarch}}
	\end{center}
	\end{figure}
The first row of Figure \ref{figgarch} reveals that almost all tests hold their size irrespective of $\pi_1$ (the larger $\pi_1$ the larger the serial correlation) and the time series length $T$. For small sample sizes we observe that the non parametric tests (blue and black curves) get a little conservative for moderately large $\pi_1$. This behaviour vanishes as $\pi_1$ increases further. This is typical for cusum type tests, since these tests get more conservative with increasing serial dependence \citep{durre2018}. On the other hand under increasing serial dependence the estimation of the long run variance gets more negatively biased, which first cancels out the conservativeness and then dominates it.\\
We notice that $Q_n^{(0.8)}$ exceeds its size if both $\pi_1$ and $T$ are large. This points to a too small choice of the bandwidth. Since $Q_n^{(0.8)}$ has already the largest bandwidth and \cite{gerstenberger2016tests} also propose usage $T^{-1/3}$ we decide against enlarging $b_T$ which would considerably harm power under the alternative. It is also noteworthy that $M_\infty$ is very conservative for large $\pi_1.$ This is not surprising, since the assumption of finite second moments, which are in fact fourth moments testing the variance testing, is violated. Furthermore we notice that the likelihood based tests have problems if $\pi_1$ is close to 0, which is not surprising since $\pi_1>0$ is an assumption for the asymptotics. GARCH-res is rather conservative under any value of $\pi_1,$ especially if $T$ is small. The finite sample correction (\ref{fini}) might also be advantageous here.\\
In the following we evaluate the power under the alternative. We use the model:
\begin{align*}
Y_t=\begin{cases}
X_t,&t\leq bT\\
X_t \cdot \delta,&T>bT
\end{cases}
\end{align*}
where $(X_t)_{t\in 1,\ldots,T}$ follows the ARCH(1) model defined in (\ref{ARCH}). Generally we want to compensate for obvious effects and therefore relate the jump height $\delta$ to the sample size $T$, the fraction of the data before the change $b$ and the ARCH-Parameter $\pi_1$. 
One expects asymptotically stable power of the tests for jump heights which are proportional to $1/\sqrt{T}$. Furthermore the power should decrease if the value $b$ departs fron 0.5. It turns out that a jump height proportional to $1/b/(1-b)$ stabilize power. Finally the power should decrease with increasing $\pi_1$ due to heavier tails and more serial correlation. The accurate stabilizing function is not known and differs for the different estimators. We  account for this with the factor $1/(1-\pi_1)$ resulting in
\begin{align}\label{jumpheight}
\delta=1+\frac{s}{\sqrt{T}\cdot b\cdot (1-b)\cdot (1-\pi_1)}.
\end{align}
Using this more complicated jump heights allows us to see more clearly the differences between the estimators and characteristics of their finite sample behaviour.\\
First we want to investigate the effect of increasing $s$.  We set $\pi_1=0,~b=0.5$ and $T=100$ and visualize the power under varying jump height $s$ on the left hand side of the second row of Figure \ref{figgarch}. We see that the power under a negative jump of size $s$ is larger than the power under a positive $s.$ This might be due to the fact that the variance respectively scale lives naturally on a log scale. However if $T$ is very large $\delta$ differs only marginally from 1 where the derivative of the logarithm is nearly 1, so we do not see this effect for $T=800$ on the right hand side of the second row of Figure \ref{figgarch}. More surprisingly is the fact that the order of the tests depends on the sign of the jump. For a negative jump MD has the largest power, followed by $M_{q_{0.95}}$ and GMD. The ordering changes to GMD, $M_{q_{0.95}}$ and MD for a positive jump. The variance based test $M_{\infty}$ has problems for small $T$ due to its conservativeness under the null hypotheses but becomes the most powerful test under $T=800.$ Noteworthy is also the non-monotonic behaviour of $Q_n^{(0.8)}$ for negative jumps and small $T$. The specific GARCH-tests are generally not so powerful as the non-parametric ones, which is due to the choice of $\pi_1=0$. We see furthermore that the parametric tests gain more power with increasing $T$.\\
In the third row of Figure \ref{figgarch} we see the effect of the fraction of change $b$. We fix $\pi_1=0.5$ and $s=1$ and vary $b$ between 0.05 and 0.95 in steps of 0.05. Note that one expects the maximal power at $b=0.5$. Since we already account for this in the jump height (\ref{jumpheight}), one rather expects a straight line instead. Especially for $T=100$ we see large deviations from that. GMD, MD, $M_\infty$ and GARCH-LM1 are more powerful if the change appears at the end of the time series than in the beginning. The other tests show the opposite behaviour. The most extreme here is the GARCH-res-ecdf followd by $M_{q_{0.5}}$ and $Q_n^{(0.8)}$. The larger $k$ the less pronounced is the difference in power between early and late changes. For $T=800$ we see a larger plateau where the power is indeed constant, but the asymmetry of the tests is still visible. Specific GARCH tests generally outperform their competitors now because of the choice $\pi_1=0.5$\\
In the last row of Figure \ref{figgarch} we investigate the tests under increasing serial correlation and heavy tails by varying $\pi_1$ between 0 and 0.95 in steps of 0.05. We set $s=1$ and $b=0.5$. Results for $T=100$ are shown on the left hand side and for $T=800$ on the right. Since the jump is positive GMD is most powerful for small $\pi_1$. As before $M_\infty$ is handicapped by its conservativeness under the null and therefore for Gaussian time series only as powerful as GMD for $T=800$. We see furthermore that for $p_1>0.55$ for $T=100$ respectively $pi_1>0.35$ for $T=800$ $M_{q_{0.95}}$ becomes the most powerful non parametric test. For $T$ and $\pi_1$ large it gets beaten by $M_{q_{0.95}}$ though. The parametric tests outperform the non-parametric ones if $\pi_1>0.4$. The difference gets more pronounced if $T$ is large, as we have seen before. Generally the residual based test GARCH-res seems to be the best choice overall.\\
In summary we have seen that our approach is advantageous under heavy tails compared to other non-parametric methods. The tuning parameter $k$ should be chosen according to the degree of heavy tailedness. Without a-priori information we recommend using $k=q_{\chi^2}(0.95)$ since it delivers good results under normality and various degrees of heavy tailedness. If one can assume Gaussianity we prefer MD over GMD because of its computational simplicity. If one expects a GARCH process with severe serial dependence one should use GARCH-res.
\subsection{Change in cross sectional dependence of a multivariate time series}
Now we want to see how results generalize for $p>1.$ To the best of our knowledge there is no other robust test for a change in the cross covariance $\Sigma=\mbox{Cov}(\boldsymbol{X}_t)$. But there is a non-robust one by \cite{aue2009break} based on the empirical covariance (abbreviated as Cov)
\begin{align*}
\frac{1}{n}\sum_{i=1}^n (\boldsymbol{X}_i-\hat{\boldsymbol{\mu}})(\boldsymbol{X}_i-\hat{\boldsymbol{\mu}})^T.
\end{align*}
where $\hat{\boldsymbol{\mu}}$ is the arithmetic mean. Except for using the mean instead of the median this test is a special case of our Huberized covariance test with $k=\infty.$ There is some kind of robustified version which uses roots of absolute values of the original observations:
$\boldsymbol{Y}_i=(Y_i^{(1)},\ldots,Y_i^{(p)})=(|X_i^{(1)}|^\delta,\ldots,|X_i^{(p)}|^\delta)$ with $\delta\in (0,1]$ but in our simulations with random vectors which are not positive this approach does not lead to decent results. This is not surprising since taking absolute values of the components completely destroys the dependence structure.\\
There are other approaches which concentrate on a change in the dependence structure. In \cite{wied2014nonparametric} a test based on the empirical correlation (abbreviated as Cor) is proposed
\begin{align*}
\hat{\rho}_{k,l}=\frac{\sum_{i=1}^T(X_i^{(k)}-\overline{X^{(k)}})(X_i^{(l)}-\overline{X^{(l)}})}{\sqrt{\sum_{i=1}^n(X_i^{(k)}-\overline{X^{(k)}})^2\sum_{i=1}^n(X_i^{(l)}-\overline{X^{(l)}})^2}}
\end{align*}
The serial correlation is accounted by a block bootstrap. The sample correlation is known to be efficient under normality but not robust.\\
A more robust approach is presented in \cite{bucher2014}, who propose a test based on the empirical copula
\begin{align*}
\hat{C}(\boldsymbol{u})=\frac{1}{n}\sum_{i=1}^n \boldsymbol{I}(\boldsymbol{U}_i\leq \boldsymbol{u})
\end{align*}
where $\boldsymbol{U}_i=\frac{1}{n}(R_i^{(1)},\ldots,R_i^{(p)})$ and $R_i^{(k)}=\sum_{j=1}^n I_{X_j^{(k)}\leq X_i^{(k)}}$ (abbreviated as Copula). The authors propose two different versions of multiplier bootstraps to calculate $p-$values. We decided to use the computationally faster one, though the other is known to lead to more powerful tests under small and moderate sample sizes. Note that already the faster version is around 50 times slower than our test whereas the other version is about 3000 times slower. We use the implementation in the npcp-package \citep{npcp}. A test based on the empirical copula posseses power under a broader range of alternatives than the other tests. Covariance, correlation and also huberized covariance test are constructed to have power against changes in the linear dependence, whereas a copula test can detect any kind of change. We therefore expect it to be less powerful than the others if there is indeed a change in the linear dependence.\\
\cite{kojadinovic2016testing} propose to use multivariate generalizations of Spearmans $\rho$. Denote $\hat{\rho}^{i,j}$ Spearmans $\rho$ of $(X^{(i)}_1,\ldots,X^{(i)}_T)$ and $(X^{(j)}_1,\ldots,X^{(j)}_T),$ then the test (abbreviated as Spearman) is based on
\begin{align*}
{2 \choose p}^{-1}\sum_{1\leq i < j\leq p} \hat{\rho}^{i,j}.
\end{align*}
There are two other proposals, but this one seems to have the largest power \citep{kojadinovic2016testing}. There are (at least) two possibilities to estimate the long run variance: a multiplier bootstrap and a kernel estimation. The authors favour the former since the latter has problems to hold it size under strong serial dependence. Nevertheless we choose the kernel estimator since it is considerably faster and we do not look at strong dependences.\\
Two tests based on generalizations of Kendalls $\tau$ are proposed in \cite{quessy2013}. We look at the one which is implemented in the \cite{npcp}-Package (abbreviated as Kendall) and based on
\begin{align*}
\frac{2}{T(T-1)}\sum_{1\leq i<j\leq T}^T \left( I_{\{\boldsymbol{X}_i<\boldsymbol{X}_j\}}+I_{\{\boldsymbol{X}_i>\boldsymbol{X}_j\}}\right).
\end{align*}
Two different estimators for the long run variance are possible, a multiplier bootstrap \citep{bucher2014dependent} and a kernel estimator. We choose the later because of the lower computation times. Note that Spearman and Kendall are type of projection tests. One expects that they have large power if changes are into the same direction (the dependence gets stronger or weaker overall) but low power if changes are into different directions (the dependence between some variables gets stronger but between others gets weaker).\\
We sample from a multivariate AR(1) model
\begin{align}\label{modelmulti}
\boldsymbol{X}_t=\rho \boldsymbol{X}_{t-1}+\boldsymbol{\epsilon}_t,~t=1,\ldots,T
\end{align}
where $(\boldsymbol{\epsilon}_t)_{t\in \mathbb{Z}}$ is a series of independent and multivariate t-distributed random vectors with mean $\boldsymbol{\mu}=0$, shape $V$ and $k$ degrees of freedom. The smaller $k$ the more heavy tailed is $(\boldsymbol{X}_t)_{t\in \mathbb{N}}$. The larger $k$ the more similar is $(\boldsymbol{X}_t)_{t\in \mathbb{N}}$ to a Gaussian distribution. Consequently if we write $k=\infty$ we sample $(\boldsymbol{\epsilon}_t)_{t\in \mathbb{Z}}$ from a multivariate normal distribution.\\
First we want to verify if the tests hold their size. We notice that the multivariate tests get even more conservative under serial dependence if the dimension $p$ increases. The reason behind this is not known yet and content of future research. We try to counterbalance this effect by choosing very short bandwidths $b_T$ which can be found in Table \ref{bandmulti}.
\begin{table}
\begin{center}
\begin{tabular}{l|c}
Estimator&$b_T$\\\hline
Cor&$b_T=log_2(T)$\\
Cov&$b_T=log_{1.8+p(p+1)/40}(T/50)$\\
Huberg&$b_T=log_{1.8+p(p+1)/40}(T/50)$\\
Huberm&$b_T=log_{1.8+p(p+1)/40}(T/50)$\\
Spearman&$b_T=T^{1/3}$\\
Kendall&$b_T=T^{1/3}$\\
Copula&$b_T=log_p(n/12.5)$
\end{tabular}
\caption{\label{bandmulti}Chosen bandwidths $b_T$ for different multivariate change-point tests.}
\end{center}
\end{table}
Note that we choose the somehow arbitrary bandwidths based on plenty of simulations. Since there is no theoretical justification we can only guarantee that these choices are reasonable in our framework of AR(1) processes with moderate $p$ and $\rho.$ Up to our knowledge there are now profound simulation studies published for multivariate cusum-type statistics. \cite{aue2009break} choose $b_T=\log_{10}(T)$, but only consider very weak serial dependence, \cite{wied2014nonparametric} prefer $b_T=\lfloor T^{1/4}\rfloor$ but only looked at MA(1) processes.\\
First we look at the behaviour under the null hypothesis. Results are based on 1000 runs. Under normally distributed innovations we see that all tests hold their size reasonably well, see Table \ref{multnorm}. Multivariate cusum type tests are conservative if $p=5$, $\rho=0.5$ and small $T$. Furthermore the covariance based tests is very anti conservative for small $T$, large $p$ and no serial dependence. There is not much changing if we look at $t_3$ distributed innovations, see Table \ref{multt3}. Only the correlation based test behaves different and gets strongly anti conservative, especially if $p$ is large.\\
We want to look also at the behaviour under the alternative. Note that there are countless different scenarios one can look at. We concentrate on two. First we want to verify if projection type statistics are indeed given an advantage if changes are uniformly into one direction. 
\begin{table}
	\begin{tabular}{l|ccc|ccc||ccc|ccc}
	$\rho$&\multicolumn{6}{c||}{0}&\multicolumn{6}{|c}{0.5}\\
	$p$&\multicolumn{3}{c}{2}&\multicolumn{3}{|c||}{5}&\multicolumn{3}{|c}{2}&\multicolumn{3}{|c}{5}\\
		$T$	&200 &400 &800 &200 &400 &800 &200 &400 &800 &200 &400 &800\\\hline
Cor			&0.04&0.06&0.06&0.00&0.01&0.02&0.07&0.06&0.08&0.02&0.02&0.05\\
Cov			&0.02&0.03&0.04&0.20&0.03&0.02&0.01&0.03&0.04&0.02&0.01&0.02\\
Huberg0		&0.04&0.04&0.06&0.03&0.04&0.04&0.06&0.07&0.06&0.01&0.05&0.06\\
Huberg0.5	&0.04&0.05&0.06&0.03&0.04&0.05&0.05&0.07&0.07&0.01&0.05&0.06\\
Huberg0.8	&0.04&0.06&0.06&0.03&0.04&0.04&0.05&0.07&0.07&0.01&0.05&0.07\\
Huberm0		&0.05&0.05&0.06&0.03&0.04&0.04&0.05&0.05&0.06&0.03&0.04&0.06\\
Huberm0.5	&0.05&0.05&0.05&0.04&0.04&0.03&0.04&0.06&0.06&0.02&0.03&0.06\\
Huberm0.8	&0.05&0.06&0.06&0.03&0.04&0.04&0.06&0.07&0.07&0.01&0.04&0.06\\
Copula		&0.00&0.00&0.23&0.00&0.08&0.30&0.00&0.00&0.22&0.17&0.06&0.23\\
Spearman	&0.03&0.05&0.04&0.03&0.03&0.04&0.04&0.04&0.06&0.03&0.05&0.04\\
Kendall		&0.04&0.05&0.05&0.05&0.04&0.05&0.05&0.05&0.06&0.07&0.06&0.05
\end{tabular}
	\caption{\label{multnorm}Empirical size under multivariate AR(1) model: $\boldsymbol{X}_t=\rho \boldsymbol{X}_{t-1}+\boldsymbol{\epsilon}_t,~t=1,\ldots,T$ with $\rho\in \{0,0.5\}$, $T\in\{200,400,800\}$, $\boldsymbol{\epsilon}\sim N(\boldsymbol{0},I_p)$, $p\in {2,5}$ and a nominal size of $0.05$.}
\end{table}

\begin{table}
	\begin{tabular}{l||ccc|ccc||ccc|ccc}
	$\rho$&\multicolumn{6}{c||}{0}&\multicolumn{6}{|c}{0.5}\\
	$p$&\multicolumn{3}{c}{2}&\multicolumn{3}{|c||}{5}&\multicolumn{3}{|c}{2}&\multicolumn{3}{|c}{5}\\
		$T$	&200 &400 &800 &200 &400 &800 &200 &400 &800 &200 &400 &800\\\hline
Cor			&0.07&0.04&0.04&0.12&0.09&0.07&0.08&0.07&0.07&0.19&0.14&0.13\\
Cov			&0.01&0.01&0.01&0.21&0.04&0.01&0.01&0.01&0.03&0.09&0.02&0.01\\
Huberg0		&0.04&0.06&0.05&0.03&0.04&0.05&0.06&0.06&0.06&0.01&0.05&0.07\\
Huberg0.5	&0.04&0.05&0.04&0.03&0.04&0.06&0.06&0.06&0.07&0.01&0.05&0.08\\
Huberg0.8	&0.04&0.04&0.04&0.03&0.03&0.05&0.06&0.07&0.08&0.01&0.05&0.08\\
Huberm0		&0.05&0.06&0.05&0.04&0.04&0.04&0.06&0.05&0.05&0.03&0.04&0.06\\
Huberm0.5	&0.03&0.05&0.03&0.04&0.04&0.04&0.07&0.05&0.06&0.01&0.03&0.05\\
Huberm0.8	&0.04&0.05&0.04&0.03&0.04&0.04&0.06&0.06&0.07&0.00&0.04&0.08\\
Copula		&0.00&0.00&0.39&0.00&0.07&0.05&0.00&0.00&0.38&0.00&0.08&0.15\\
Spearman	&0.02&0.05&0.04&0.03&0.04&0.05&0.03&0.04&0.06&0.05&0.05&0.06\\
Kendall		&0.02&0.05&0.05&0.05&0.04&0.05&0.05&0.06&0.06&0.07&0.05&0.05

\end{tabular}
	\caption{\label{multt3}Empirical size under multivariate AR(1) model: $\boldsymbol{X}_t=\rho \boldsymbol{X}_{t-1}+\boldsymbol{\epsilon}_t,~t=1,\ldots,T$ with $\rho\in \{0,0.5\}$, $T\in\{200,400,800\}$, $\boldsymbol{\epsilon}\sim t_3(\boldsymbol{0},I_p)$, $p\in {2,5}$ and a theoretical size of $0.05$.}
\end{table}
Therefore we look at model (\ref{modelmulti}) with
\begin{align*}
\epsilon_t\sim \begin{cases}N(0,I_p)&t=1,\ldots,200 \\
N(0,\Sigma_{\Delta})&t=201,\ldots,400
\end{cases}~~\mbox{with}~~\Sigma_{\Delta}=\begin{pmatrix}
1&0.2&\Delta&0.2\\
0.2&1&\Delta&\Delta\\
\Delta&\Delta&1&0.2\\
0.2&\Delta&0.2,1
\end{pmatrix}
\end{align*}
and $\Delta \in [-0.2,0.2].$
So depending on $\Delta$ we have a change into the same direction ($\Delta=0.2$) or into opposite directions ($\Delta=-0.2$). As we can see in Figure \ref{multidiff} projection type tests like Spearman and Kendall outperform truly multivariate ones if the change is into the same direction and have problems to detect a change if it is into different directions. Surprisingly the Copula-test shows similar behaviour. For larger $T$ this test would also be able to detect changes if $\Delta=-0.2$ but the power for $\Delta=0.2$ stays clearly higher. We do not have any explanation for this. All other tests have surprisingly higher power if $\Delta$ is negative. We do not see significant changes of the order of these tests depending on $\Delta.$ The Cor and Huber0.8 are generally the ones with the highest power, slightly outperforming Cov, which is a little more conservative under the null hypothesis.\\
\begin{figure}
\includegraphics[width=0.9\textwidth]{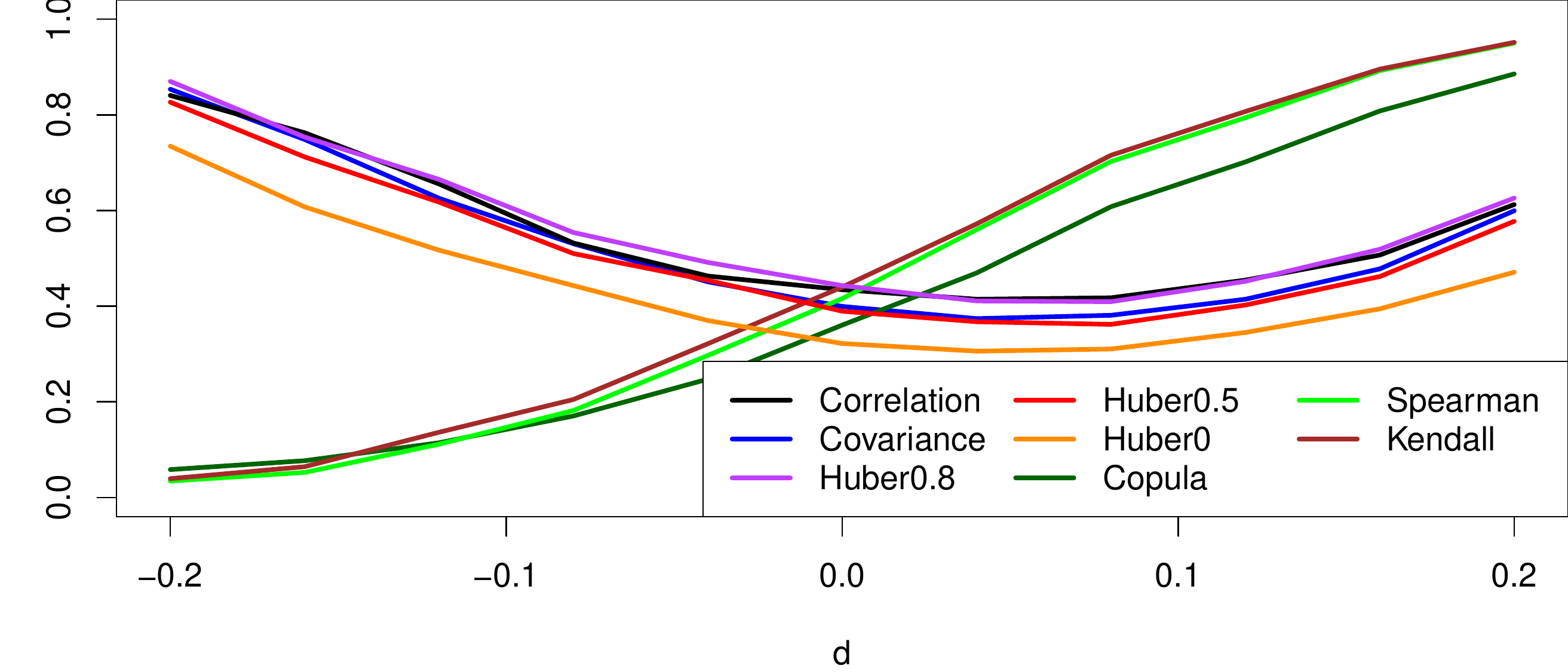}
\caption{Empirical power under one change point and a change of the cross correlation structure of the innovations from $\mbox{Cov}(\boldsymbol{\epsilon}_t)=I_4$ for $t=1,\ldots,200$ to $\mbox{Cov}(\boldsymbol{\epsilon}_t)=\Sigma_{\Delta}$ for $t=201,\ldots,400$.\label{multidiff}}
\end{figure}
We want to investigate the influence of heavy tails. Therefore we look at model (\ref{modelmulti}) with
\begin{align*}
\epsilon_t\sim \begin{cases}t_{df}(\boldsymbol{0},I_p)&t=1,\ldots,200 \\
t_{df}(\boldsymbol{0},\Sigma_2)&t=201,\ldots,400
\end{cases}~~\mbox{with}~~\Sigma_{\Delta}=\begin{pmatrix}
1&0.3&0&0.3\\
0.3&1&0&0\\
0&0&1&0.3\\
0.3&0&0.3,1
\end{pmatrix}
\end{align*}
and $df\in\{1,2,3,5,8,10,20,50,200\}$. Note that we choose the special structure with some covariances changing and others not to achieve a fair comparison between projection type tests and truly multivariate ones. Results are based on 2000 runs. We see in Figure \ref{multidf} that even under 200 degrees of freedom, which is hardly distinguishable from a normal distribution, the Huber08 outperforms all the other tests including Cor. The difference between Huber08 and Cor as well as Cov gets larger for less degrees of freedom. We see that apart from Huber0 all tests loose power if the innovation distribution gets more heavy tailed. But there is a difference how fast the decrease is. The loss for Cov and Cor is the largest and for Huber05 the smallest. The power of the correlation based test increases for 1 degree of freedom since the tests gets extremely anti conservative in this case. Only for very few degrees of freedom Huber0 and Huber0.5 outperform Huber0.8. 
\begin{figure}
\includegraphics[width=0.9\textwidth]{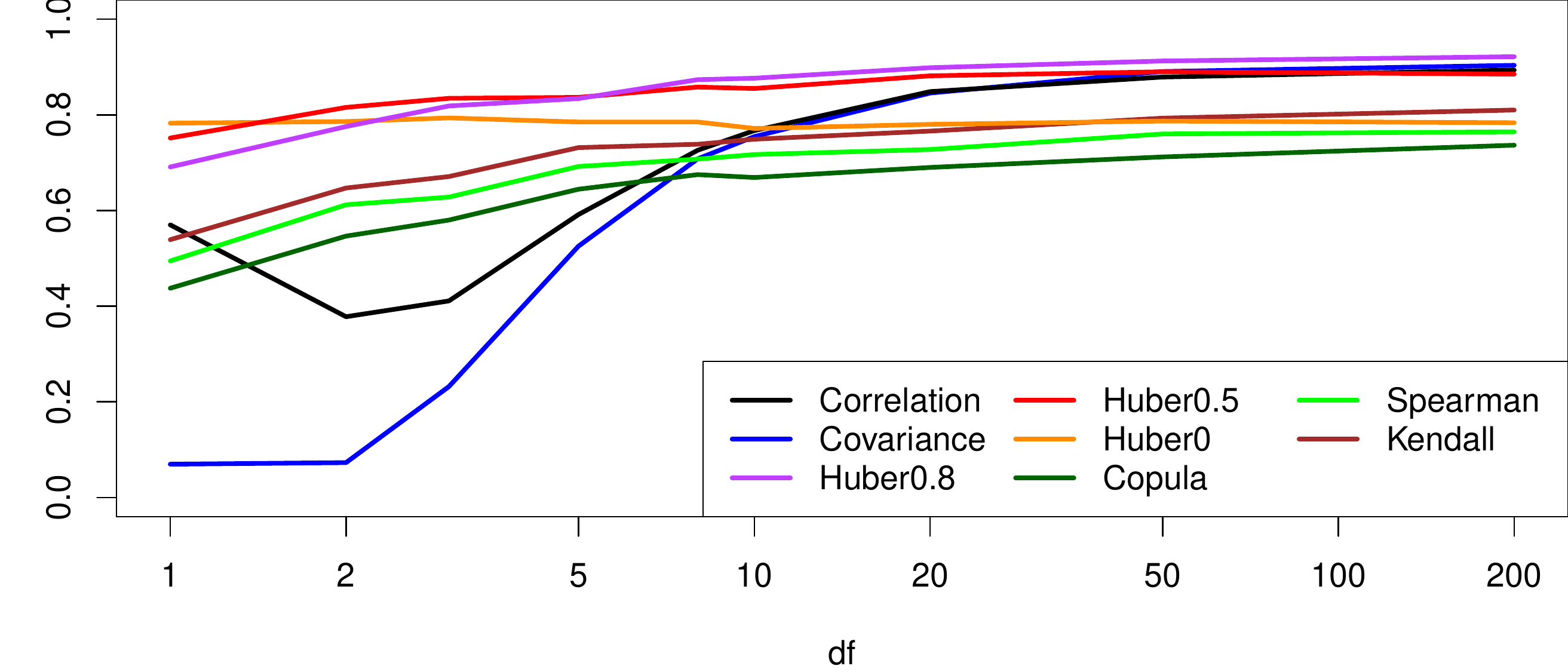}
\caption{Empirical power under one change point and a change of the cross correlation structure of the innovations from $\boldsymbol{\epsilon}_t\sim t_{df}(\boldsymbol{0},I_4)$ for $t=1,\ldots,200$ to $\boldsymbol{\epsilon}_t\sim t_{df}(\boldsymbol{0},\Sigma_2)$  for $t=201,\ldots,400$ and $df\in\{1,2,3,5,8,10,20,50,200\}$.\label{multidf}}
\end{figure}

\section{Summary}
We develop a new non-parametric and robust approach to detect change-points in possibly multivariate time series. The method can be easily adjusted to the type of change one is interested in. We explicitly propose tests for a change in location, scale or cross dependence. 
Simulations indicate that these test are almost as powerful as classical cusum type test under Gaussian data. In these settings they have similar power as other robust methods (if they already exist), while having usually a far lower computational complexity. Under heavy tails our new methods clearly outperform cusum type tests and usually also outperform existing robust methods.\\
Simulations reveal an interesting problem. Our tests become conservative under serial dependence. This effect seems to be more severe if the dimension $p$ of the time series is large. Classical cusum tests show the same behaviour. To the best of our knowledge there are no theoretical results describing this behaviour yet. If it was possible to correct the test statistic, the test would become more powerful under the alternative.\\
It remains to investigate the behaviour of our method under local and fixed alternatives. In the later case the long run covariance estimation does not converge to the theoretical value under stationarity anymore. In the multivariate case the estimated matrix gets even singular which creates an additional challenge.\\
Also of interest are the properties of the related change point estimator. The distribution of the cusum estimator can be found in \cite{csorgo1997limit}. There is also a similar result for a robust change point test estimator of a location change based on ranks \citep{gerstenberger2018robust} which has the same convergence rates as the classical cusum one. The straightforward estimator of change for our method is $\mbox{argmax}~W_T(x)^2$. One would expect that this estimator is more efficient under heavy tailed data but less efficient under Gaussianity.

\bibliographystyle{model2-names}
{\small
\bibliography{lit3}
}

\appendix
\section{Proofs}\label{proofpanel}
We first show that the multivariate cusum statistic converges to a multivariate Brownian motion, if we fix $\boldsymbol{\mu}$ and $\boldsymbol{\sigma}$. Denote by
\begin{align*}
M_n=\frac{1}{T}\left(\sum_{i=1}^{\lfloor nx \rfloor}\boldsymbol{Y}_i-\frac{\lfloor nx \rfloor}{n}\sum_{i=1}^n\boldsymbol{Y}_i\right)
\end{align*}
the multivariate cusum statistic with known location $\boldsymbol{\mu}$ and scale $\boldsymbol{\sigma}.$
\begin{proposition}
	Let $(\boldsymbol{X}_i)_{i\in \mathbb{N}}$ be a stationary and strongly mixing sequence with approximation constants $(a_k)_{k\in \mathbb{N}}$ fulfilling $a_k=O(k^{-1-\epsilon})$ for some $\epsilon>0$ and $\Psi:\mathbb{R}^p \rightarrow \mathbb{R}^s$ be a bounded function such that $U=\mbox{Cov}(\boldsymbol{Y_1},\boldsymbol{Y_1})+2\sum_{h=1}^\infty \mbox{Cov}(\boldsymbol{Y}_i,\boldsymbol{Y}_{i+h})$ has only strictly positive eigenvalues, then
	\begin{align*}
	(M_n(t))_{t\in [0,1]}\stackrel{w}{\rightarrow} (M(t))_{t\in [0,1]}
	\end{align*}
	where $(M_t)_{t\in [0,1]}$ is an $s$-dimensional Brownian motion with $\mbox{Cov}(W_t)=tU.$
\end{proposition}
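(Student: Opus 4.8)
The plan is to obtain the weak convergence from convergence of the finite-dimensional distributions together with tightness in the Skorokhod space $D[0,1]$ of $\mathbb{R}^s$-valued right-continuous functions with left limits, after first reducing $M_n$ to an ordinary partial-sum process. Because $\Psi$ is bounded, every coordinate of $\boldsymbol Y_i=\Psi(D_{\boldsymbol\sigma}^{-1}(\boldsymbol X_i-\boldsymbol\mu))$ is bounded, so $\boldsymbol Y_i$ has moments of all orders, and being a fixed measurable image of $\boldsymbol X_i$ the sequence $(\boldsymbol Y_i)$ is strongly mixing with coefficients no larger than $(a_k)$. Writing $\boldsymbol Z_i=\boldsymbol Y_i-E\boldsymbol Y_1$ (the mean is constant by stationarity) and $S_n(t)=n^{-1/2}\sum_{i=1}^{\lfloor nt\rfloor}\boldsymbol Z_i$, the deterministic centering in $M_n$ cancels, so $M_n(t)=S_n(t)-(\lfloor nt\rfloor/n)S_n(1)$ is a fixed continuous functional of $(S_n(t))_{t\in[0,1]}$; by the continuous mapping theorem it then suffices to prove the invariance principle $S_n\stackrel{w}{\to}M$ in $D[0,1]$, with $M$ the $s$-dimensional Brownian motion satisfying $\mbox{Cov}(M_t)=tU$.

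For the limit covariance I would invoke the covariance inequality for $\alpha$-mixing sequences applied to the bounded vectors $\boldsymbol Z_i$, which yields $\|\mbox{Cov}(\boldsymbol Z_1,\boldsymbol Z_{1+h})\|\le C\,a_h$; since $a_k=O(k^{-1-\epsilon})$ this is summable, so $U=\mbox{Var}(\boldsymbol Z_1)+2\sum_{h\ge 1}\mbox{Cov}(\boldsymbol Z_1,\boldsymbol Z_{1+h})$ is well defined and $n^{-1}\mbox{Var}(\sum_{i\le n}\boldsymbol Z_i)\to U$, while the assumed strict positivity of the eigenvalues of $U$ rules out a degenerate limit in any direction. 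The finite-dimensional distributions I would handle by the Cram\'er--Wold device: for a partition $0=t_0<\cdots<t_m=1$ and vectors $\boldsymbol b_1,\dots,\boldsymbol b_m\in\mathbb{R}^s$, the scalar sequence formed by projecting $\boldsymbol Z_i$ onto $\boldsymbol b_l$ inside the $l$-th time block is bounded, stationary and strongly mixing, so the classical central limit theorem for strongly mixing sequences applies (for bounded summands nothing beyond $\sum_k a_k<\infty$ and the non-degeneracy of $U$ is needed), and a short computation identifies the limiting variance as $\sum_l(t_l-t_{l-1})\,\boldsymbol b_l^\top U\boldsymbol b_l$, i.e.\ the independent-increments Brownian structure. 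A self-contained derivation of this CLT would use the Bernstein big-block/small-block decomposition: pick block lengths $p_n,q_n$ with $q_n/p_n\to 0$ and $p_n/n\to 0$, discard the small blocks and the remainder using the variance bound, decouple the big blocks via the standard estimate bounding the distance between their joint characteristic function and the product of the individual ones by the mixing coefficients at the inter-block gaps, and finish with Lyapunov's central limit theorem for the resulting row-independent array.

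The delicate step is tightness of $(S_n)_n$ in $D[0,1]$. Since tightness of every coordinate process implies tightness of the vector process, and since the finite-dimensional convergence is already secured, I would verify the Billingsley moment criterion, which reduces to a block bound $E\bigl|\sum_{i=a+1}^{b}\boldsymbol Z_i\bigr|^4\le C\,(b-a)^2$ (equivalently $E[\,|S_n(t)-S_n(s)|^2\,|S_n(s)-S_n(r)|^2\,]\le C(t-r)^2$ for $r\le s\le t$). For a bounded strongly mixing stationary sequence such a fourth-moment bound for partial sums holds and is comfortably covered by Assumption~1 ($\sum_k k^2 a_k<\infty$), so one may simply work under that stronger rate here; under the weaker rate $a_k=O(k^{-1-\epsilon})$ one would instead quote a sharp functional central limit theorem for strongly mixing sequences, which is available because boundedness lets the moment exponent be taken arbitrarily large, turning the moment--mixing trade-off $\sum_k a_k^{\delta/(2+\delta)}<\infty$ into $\sum_k a_k<\infty$. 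With finite-dimensional convergence and tightness in hand, $S_n\stackrel{w}{\to}M$, and the assertion for $M_n$ follows by the continuous mapping theorem applied to the functional recorded above.

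I expect this fourth-moment / tightness bound to be the main obstacle: the finite-dimensional part is essentially classical for bounded mixing sequences, but controlling the modulus of continuity of the partial-sum process at the right order either forces reliance on the paper's stronger Assumption~1 or on an optimal strong-mixing invariance principle. A secondary, purely bookkeeping nuisance is keeping the vector-valued Cram\'er--Wold reductions consistent with tightness, which is why I would establish tightness coordinatewise and assemble afterwards.
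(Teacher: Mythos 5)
Your proof is sound, but it takes a more classical, self-contained route than the paper. The paper's argument is essentially two lines: a Cram\'er--Wold reduction for processes in the $s$-dimensional Skorokhod space (citing Wooldridge--White), followed by a direct appeal to the optimal functional central limit theorem of Merlev\`ede and Peligrad for strongly mixing sequences, whose three hypotheses ($T a_T\to 0$, the quantile-integral condition $\sum_n\int_0^{a_n}Q^2(u)\,du<\infty$, and a non-degenerate long-run variance) are immediate here from the mixing rate, the boundedness of $\Psi$, and the positivity of the eigenvalues of $U$. You instead rebuild the invariance principle from scratch: finite-dimensional distributions via Cram\'er--Wold and a Bernstein big-block/small-block CLT, and tightness via Billingsley's fourth-moment criterion. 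The trade-off you identify yourself is the real one: your tightness bound $E|\sum_{i=a+1}^b \boldsymbol Z_i|^4\le C(b-a)^2$ needs roughly $\sum_k k^2a_k<\infty$ (the paper's global Assumption~1), whereas the proposition as stated only assumes $a_k=O(k^{-1-\epsilon})$; under that weaker rate your fallback is precisely to quote the sharp mixing FCLT, i.e.\ to collapse back onto the paper's argument. So the paper's route is shorter and valid under the weaker rate, while yours is more elementary and transparent but either leans on the stronger mixing assumption or ultimately cites the same deep result. One small remark on both your write-up and the paper's statement: with $M_n(t)=S_n(t)-(\lfloor nt\rfloor/n)S_n(1)$ the continuous-mapping step delivers the bridge $M(t)-tM(1)$ rather than $M$ itself; the proposition's phrasing (a Brownian motion as the limit of the centered cusum process) is an inconsistency in the paper that your reduction makes visible, and it would be worth stating explicitly which of the two processes you mean.
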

\begin{proof}
We apply the functional central limit theorem 1.4 of \cite{merlevede2000functional}. For this we first use a Cramer-Wold-device for elements in the s-dimensional Skorochod space, see for example Proposition 4.1 in \cite{wooldridge1988some}. So let $\boldsymbol{\lambda}=\lambda_1,\ldots,\lambda_s$ be arbitrary with $\boldsymbol{\lambda}^T\boldsymbol{\lambda}=1,$ then \begin{align}\label{CW}
(M_n(t))_{t\in [0,1]}\stackrel{w}{\rightarrow} (M(t))_{t\in [0,1]}
 ~~\mbox{if and only if}~~ \left(\sum_{j=1}^s \lambda_j M_n(t)^{(j)}\right)_{t\in [0,1]}\stackrel{w}{\rightarrow} \left(\sum_{j=1}^s \lambda_j M_n(t)^{(j)}\right)_{t\in [0,1]}.\end{align}
 By changing of summation, we see that (\ref{CW}) holds if an invariance principle applies to $\frac{1}{\sqrt{T}}\sum_{i=1}^{Tx}(\boldsymbol{\lambda}^T\boldsymbol{Y}_i),$ which is given in \cite{merlevede2000functional} under three conditions:
 \begin{itemize}
 	\item $\alpha_T T\rightarrow 0,$
 	\item $\sum_{n=1}^\infty \int_0^{a_n} Q^2_{|\boldsymbol{\lambda}^T\boldsymbol{Y}_1|}(u)~du<\infty$ where $Q_W(u)=\inf\{t\geq 0:P(W>t)\leq u\},$
 	\item the long run variance of $(\boldsymbol{\lambda}^T\boldsymbol{Y}_i)_{i \in \mathbb{N}}$ is strictly positive.
 \end{itemize}
 The first condition is satisfied, since $\alpha_k=O(k^{-1-\epsilon})$. For the second we use that $\boldsymbol{\lambda}^T\boldsymbol{Y}_1$ is bounded such that $|Q^2_{|\boldsymbol{\lambda}^T\boldsymbol{Y}_1|}(u)|\leq C$ for some $C>0$. The last condition is fulfilled since $U$ has only positive eigenvalues.
\end{proof}
Now we show that estimating location and scale is asymptotically negligible.
\begin{proposition}
Let Assumptions 1-4 be fulfilled then
\begin{align*}
&(\boldsymbol{R}_{T}(x))_{0\leq x\leq 1}\\
&=\left[\frac{1}{\sqrt{T}}\left(\sum_{t=1}^{\lfloor Tx \rfloor}\boldsymbol{Y}_{i,T}-\frac{\lfloor Tx \rfloor}{T}\sum_{i=1}^T\boldsymbol{Y}_{i,T}\right)-
\frac{1}{\sqrt{T}}\left(\sum_{t=1}^{\lfloor Tx \rfloor}\boldsymbol{Y}_{i}-\frac{\lfloor Tx \rfloor}{T}\sum_{i=1}^T\boldsymbol{Y}_{i}\right)\right]_{0\leq x\leq 1}\rightarrow 0.
\end{align*}
\end{proposition}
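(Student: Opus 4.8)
The plan is to first reduce $\boldsymbol{R}_T$ to a single partial--sum process, then linearise the effect of plugging in $\hat{\boldsymbol{\mu}},\hat{\boldsymbol{\sigma}}$ by a Taylor expansion of $\Psi$, and finally to handle separately the few observations that land near the exemption set $D$. Write $\boldsymbol{Z}_i=D_{\boldsymbol{\sigma}}^{-1}(\boldsymbol{X}_i-\boldsymbol{\mu})$ and $\hat{\boldsymbol{Z}}_i=D_{\hat{\boldsymbol{\sigma}}}^{-1}(\boldsymbol{X}_i-\hat{\boldsymbol{\mu}})$, so that $\boldsymbol{Y}_i=\Psi(\boldsymbol{Z}_i)$ and $\boldsymbol{Y}_{i,T}=\Psi(\hat{\boldsymbol{Z}}_i)$. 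Since the subtracted term in $\boldsymbol{R}_T(x)$ is $\lfloor Tx\rfloor/T\le 1$ times the same partial--sum process evaluated at $x=1$, it is enough to show $\sup_{x\in[0,1]}|T^{-1/2}\sum_{i=1}^{\lfloor Tx\rfloor}\{\Psi(\hat{\boldsymbol{Z}}_i)-\Psi(\boldsymbol{Z}_i)\}|\to 0$ in probability, and this supremum is a maximum over the finitely many endpoints $x=k/T$. A componentwise computation gives $\hat Z_i^{(j)}=(\sigma_j/\hat\sigma_j)Z_i^{(j)}+(\mu_j-\hat\mu_j)/\hat\sigma_j$, i.e. $\hat{\boldsymbol{Z}}_i-\boldsymbol{Z}_i=\Lambda_T\boldsymbol{Z}_i+\boldsymbol{\delta}_T$ with $\Lambda_T=\mathrm{diag}(\sigma_j/\hat\sigma_j-1)$ and $\boldsymbol{\delta}_T=((\mu_j-\hat\mu_j)/\hat\sigma_j)_j$ both of stochastic order $T^{-1/2}$, by Assumption 3 and $\sigma_j>0$. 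I want to stress at the outset the mechanism that makes the statement true: the dominant part of $\Psi(\hat{\boldsymbol{Z}}_i)-\Psi(\boldsymbol{Z}_i)$ is (a smooth function of the $O_P(T^{-1/2})$ estimation error) times (a functional of $\boldsymbol{Z}_i$), whose plain partial sums grow linearly in the number of summands; it is precisely because $\boldsymbol{R}_T$ is a \emph{difference} of cusum statistics that these contributions are replaced by their cusum--centred versions, which are only of order $T^{-1/2}$ after normalisation.

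Next I would split the indices $i\le T$ according to whether $\boldsymbol{Z}_i$ is at distance at least $\epsilon$ from $D$ (the \emph{regular} indices) or not. For a regular $i$, the whole segment $[\boldsymbol{Z}_i,\hat{\boldsymbol{Z}}_i]$ lies, with probability tending to one, in the open set on which $\Psi$ is twice continuously differentiable, so Taylor's theorem yields, for each component $l$,
\begin{align*}
\Psi^{(l)}(\hat{\boldsymbol{Z}}_i)-\Psi^{(l)}(\boldsymbol{Z}_i)&=\sum_{j}\lambda_T^{(j)}\,\partial_j\Psi^{(l)}(\boldsymbol{Z}_i)Z_i^{(j)}+\sum_{j}\delta_T^{(j)}\,\partial_j\Psi^{(l)}(\boldsymbol{Z}_i)+R_i^{(l)},\\
|R_i^{(l)}|&\le\tfrac12\bigl|(\hat{\boldsymbol{Z}}_i-\boldsymbol{Z}_i)^{T}\nabla^2\Psi^{(l)}(\tilde{\boldsymbol{Z}}_i)(\hat{\boldsymbol{Z}}_i-\boldsymbol{Z}_i)\bigr|.
\end{align*}
By Assumption 2c) the sequences $(\partial_j\Psi^{(l)}(\boldsymbol{Z}_i)Z_i^{(j)})_i$ are bounded, and $(\partial_j\Psi^{(l)}(\boldsymbol{Z}_i))_i$ is bounded over the regular indices (immediate for the transformations of interest, which are essentially radial far out); being bounded functionals of the strongly mixing $(\boldsymbol{X}_i)$, their cusum--centred processes $T^{-1/2}\{\sum_{i\le\lfloor Tx\rfloor}(\cdot)-(\lfloor Tx\rfloor/T)\sum_{i\le T}(\cdot)\}$ are $O_P(1)$ uniformly in $x$, by the same maximal inequality that underlies the proof of Proposition 1. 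Multiplying by the $O_P(T^{-1/2})$ coefficients $\lambda_T^{(j)},\delta_T^{(j)}$, the linear part contributes $o_P(1)$ to $\sup_x|\boldsymbol{R}_T(x)|$. For the remainder, substituting $\hat{\boldsymbol{Z}}_i-\boldsymbol{Z}_i=\Lambda_T\boldsymbol{Z}_i+\boldsymbol{\delta}_T$ and using the second bound in Assumption 2c) together with continuity of $\nabla^2\Psi^{(l)}$ at $\tilde{\boldsymbol{Z}}_i\approx\boldsymbol{Z}_i$ gives $|R_i^{(l)}|=O_P(T^{-1})$ uniformly over regular $i$, hence $T^{-1/2}\sum_{i\le T}|R_i^{(l)}|=O_P(T^{-1/2})\to0$. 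A more careful write--up treats the $p$ marginal standardisations one at a time by telescoping, and also deals with the few indices whose $|\boldsymbol{Z}_i|$ is so large that the segment would leave the smooth region, which is harmless because $\Psi$ is bounded.

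For the remaining, \emph{singular}, indices I would first use only that $\Psi$ is bounded, so each term is $\le 2C$ in modulus and the total singular contribution to $T^{-1/2}\sum$ is at most $2C\,T^{-1/2}\,\#\{i\le T:\mathrm{dist}(\boldsymbol{Z}_i,D)<\epsilon\}$; this already disposes of the bounded hypersurface and coordinate--hyperplane components of $D$ once one lets $\epsilon=\epsilon_T\downarrow0$ slowly, since the marginal law of $\boldsymbol{Z}_1$ puts mass $O(\epsilon)$ on an $\epsilon$--tube of such a lower--dimensional set (a mild regularity condition, implicit in the density assumption behind the Bahadur representations of Assumption 3), so the count is $O_P(T\epsilon_T)=o_P(\sqrt T)$ as soon as $\epsilon_T=o(T^{-1/2})$. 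Near an isolated singularity $a_j$, where shrinking the tube that fast is not permitted, I would instead split the annulus $T^{-2p}\le|\boldsymbol{x}-a_j|\le\epsilon$ into dyadic shells: on $2^{-r-1}<|\boldsymbol{x}-a_j|\le2^{-r}$ there are $O_P(T2^{-rp})$ observations, the growth bounds $|\nabla\Psi^{(l)}|\le C_3T^{p-\delta}$ and $|\nabla^2\Psi^{(l)}|\le C_3T^{2p-\delta}$ of Assumption 2d) i) control the Taylor terms there, and summing the geometric series down to the innermost ball $\{|\boldsymbol{x}-a_j|<T^{-2p}\}$ — inside which the bounded contribution times the negligibly small expected count is again $o_P(\sqrt T)$ — gives a total $o_P(\sqrt T)$. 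In the discontinuous case 2d) iv) one uses instead that $\Psi'\equiv0$ off the hyperplanes $E_1,\dots,E_r$, so $\Psi(\hat{\boldsymbol{Z}}_i)=\Psi(\boldsymbol{Z}_i)$ exactly unless the segment crosses some $E_m$, the exceptional indices being negligible once the cusum cancellation noted in the first paragraph is invoked. Combining the three contributions gives $\sup_x|\boldsymbol{R}_T(x)|=o_P(1)$.

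\textbf{The hard part} is the analysis near the exemption set. One must choose the tube widths $\epsilon_T$ (and the dyadic decomposition around the isolated singularities) so that, simultaneously, with probability tending to one no Taylor segment of a regular index crosses $D$, the number of observations inside the tubes is $o(\sqrt T)$, and the polynomial growth exponents in Assumption 2d) i) remain large enough to absorb the $O_P(T^{-1/2})$ size of $\hat{\boldsymbol{Z}}_i-\boldsymbol{Z}_i$ shell by shell; making these requirements compatible, and — for non--differentiable $\Psi$ such as the marginal or spatial sign — extracting the needed cancellation from the cusum--difference structure rather than from crude boundedness, is the real content of the proof, the regular--region estimate being essentially the linearisation already implicit in Proposition 1.
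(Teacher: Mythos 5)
Your opening reduction is wrong, and you should delete it: it is \emph{not} enough to show $\sup_{x}|T^{-1/2}\sum_{i\le\lfloor Tx\rfloor}\{\Psi(\hat{\boldsymbol{Z}}_i)-\Psi(\boldsymbol{Z}_i)\}|\to0$, because that statement is false in general. The leading Taylor term of $\Psi(\hat{\boldsymbol{Z}}_i)-\Psi(\boldsymbol{Z}_i)$ is an $O_P(T^{-1/2})$ coefficient times $\partial_j\Psi^{(l)}(\boldsymbol{Z}_i)Z_i^{(j)}$ (resp.\ $\partial_j\Psi^{(l)}(\boldsymbol{Z}_i)$), and these functionals have nonzero mean; their plain partial sums grow like $\lfloor Tx\rfloor$, so the uncentred process is $O_P(1)$, not $o_P(1)$. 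You recognise this yourself two sentences later when you say the mechanism is the cusum cancellation, and your second paragraph correctly works with the cusum-centred processes $T^{-1/2}\{\sum_{i\le\lfloor Tx\rfloor}(\cdot)-(\lfloor Tx\rfloor/T)\sum_{i\le T}(\cdot)\}$ — which is exactly what the paper does by subtracting $\mathbb{E}({\Psi_j^{(i)}}'(\boldsymbol{X}_1)X_1^{(j)}I_{\{\cdot\}})$ inside every block and noting $\mathbb{E}(R_T(x))=0$. So the substantive argument survives, but the stated reduction contradicts it and must go. A second soft spot: boundedness of $\partial_j\Psi^{(l)}(\boldsymbol{Z}_i)$ over the regular indices does not follow from Assumption 2c) (which only controls $|{\Psi^{(l)}}'(\boldsymbol{x})^T\boldsymbol{x}|$); you flag it as ``immediate for the transformations of interest,'' which is fair, and the paper's proof needs and tacitly uses the same fact in its $G_2$ term.

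Beyond that, your route is genuinely different from the paper's in how uniformity in $x$ is obtained, and I think your version is cleaner. The paper proves pointwise $L^2$ convergence of $R_T(x)$ for each $x$ (via the strong-mixing covariance inequality giving $\mathbb{E}[(\sum_{t\le k}Z_t)^2]\le CT$ for bounded centred mixing summands) and then establishes tightness separately through the Billingsley fourth-moment criterion $\mathbb{E}|R_T(y)^{(i)}-R_T(x)^{(i)}|^4\le C|y-x|^2$, which forces a second pass through the entire decomposition with fourth moments. You instead factor the $x$-independent random coefficients $\lambda_T^{(j)},\delta_T^{(j)}=O_P(T^{-1/2})$ out of the supremum and invoke a maximal inequality (or Proposition 1 itself) for the auxiliary cusum processes of the bounded mixing sequences $\partial_j\Psi^{(l)}(\boldsymbol{Z}_i)Z_i^{(j)}$ and $\partial_j\Psi^{(l)}(\boldsymbol{Z}_i)$, which gives $\sup_x|\cdot|=O_P(1)\cdot O_P(T^{-1/2})$ in one step and makes the tightness argument unnecessary for the linear part; your counting bounds for the exemption set control the supremum automatically. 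Your dyadic-shell treatment of the isolated singularities is a reorganisation of the paper's two-region split (inner ball of radius $T^{-1/(2p)-\delta_1}$ handled by boundedness times a small hitting probability, outer annulus by the polynomial growth bounds of Assumption 2d)i)) and buys nothing essential but is tidier. The tension you isolate as ``the hard part'' — tube widths must be $o(T^{-1/2})$ for the counting bound yet wide enough that Taylor segments of regular indices stay clear of $D$ — is real, and is present in the paper's proof as well (its tubes have width $T^{-1/2-\epsilon}$ while the perturbation $\hat{\boldsymbol{Z}}_i-\boldsymbol{Z}_i$ is only $O_P(T^{-1/2})$); in the Lipschitz cases d)ii)--iii) it is resolved by using the global Lipschitz bound $|\Psi(\hat{\boldsymbol{Z}}_i)-\Psi(\boldsymbol{Z}_i)|\le L|\hat{\boldsymbol{Z}}_i-\boldsymbol{Z}_i|$ on and near the tube rather than crude boundedness, and in case d)iv) by the centring you invoke, so your outline closes it in the same way the paper implicitly does.
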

\begin{proof}
	The proof consists of two steps. First we show that $R_T(x) \rightarrow 0 ~ \forall x\in [0,1]$ and after that we show that $(R_n(x))_{x \in [0,1]}$ is tight. We use the following covariance inequality
\begin{align}\label{covin}
|\mbox{Cov}(X_1,X_{1+h})-\mathbb{E}(X_1)\mathbb{E}(X_{1+h})|\leq C \alpha_h^{1-1/p-1/q}||X_1||_p||X_{1+h}||_q
\end{align}
for $1/p+1/q=1$ which is derived in \cite{ibragimov1975independent} and \cite{deo1973note} and holds also with a slightly different proof for $p=\infty$, see \cite{borovkova2001limit}.\\
First we note that $\mathbb{E}(R_T(x))=\boldsymbol{0}$ since
\begin{align*}
\frac{1}{\sqrt{T}}\mathbb{E}\left(\sum_{t=1}^{\lfloor Tx \rfloor}\mathbb{E}(\boldsymbol{Y}_{i,T}-\boldsymbol{Y}_{i}|\hat{\boldsymbol{\sigma}},\hat{\boldsymbol{\mu}})-
\frac{\lfloor Tx \rfloor}{T}\sum_{i=1}^T\mathbb{E}(\boldsymbol{Y}_{i,T}-\boldsymbol{Y}_{i}|\hat{\boldsymbol{\sigma}},\hat{\boldsymbol{\mu}})\right)=\boldsymbol{0}.
\end{align*}
Now we decompose $\mathbb{R}^p$ into $A_{\epsilon}(a_1),~A_{\epsilon}(a_n),B_T,{B_1}_T,\ldots,{B_l}_T, {E_1}_T,\ldots, {E_r}$ and the remaining part $G$. We have not defined sets with the subindex $T$ yet. For a set $A$ we define $A_T$ for some $\epsilon>0$ as:
\begin{align*}
{A_T}=\{\mathbf{x}\in \mathbb{R}^p: \exists \mathbf{y}\in A: |\mathbf{x}-\mathbf{y}|\leq T^{-\frac{1}{2}-\epsilon}\}.
\end{align*}
We can find some $\epsilon>0$ and $T_0$ such that these sets are disjoint. Let $i \in \{1,\ldots,s\}$ be arbitrary. Using the $c_r$ inequality several times we can split the expectation of $R^2(x)$ the following way:
\begin{align*}
\mathbb{E}(R^2(x))&\leq \frac{2^q}{T}\sum_{j=1}^n\mathbb{E}([1-\frac{k}{T}]\sum_{t=1}^k [\boldsymbol{Y}_{t,T}^{(i)}-\boldsymbol{Y}_t^{(i)}]I_{\boldsymbol{X}_t\in A_{\epsilon}(a_j)}-\frac{k}{T}\sum_{t=k+1}^T[\boldsymbol{Y}^{(i)}_{t,T}-\boldsymbol{Y}^{(i)}_t]I_{\boldsymbol{X}_t\in A_{\epsilon}(a_j)})^2\\
&+\frac{2^q}{T}\mathbb{E}([1-\frac{k}{T}]\sum_{t=1}^k [\boldsymbol{Y}^{(i)}_{t,T}-\boldsymbol{Y}^{(i)}_t]I_{\boldsymbol{X}_t\in B_T}-\frac{k}{T}\sum_{t=k+1}^T[\boldsymbol{Y}^{(i)}_{t,T}-\boldsymbol{Y}^{(i)}_t]I_{\boldsymbol{X}_t\in B_T})^2\\
&+\frac{2^q}{T}\sum_{j=1}^l\mathbb{E}([1-\frac{k}{T}]\sum_{t=1}^k [\boldsymbol{Y}^{(i)}_{t,T}-\boldsymbol{Y}^{(i)}_t]I_{\boldsymbol{X}_t\in {B_j}_T}-\frac{k}{T}\sum_{t=k+1}^T[\boldsymbol{Y}^{(i)}_{t,T}-\boldsymbol{Y}^{(i)}_t]I_{\boldsymbol{X}_t\in {B_j}_T})^2\\
&+\frac{2^q}{T}\sum_{j=1}^r\mathbb{E}([1-\frac{k}{T}]\sum_{t=1}^k [\boldsymbol{Y}^{(i)}_{t,T}-\boldsymbol{Y}^{(i)}_t]I_{\boldsymbol{X}_t\in {E_j}_T}-\frac{k}{T}\sum_{t=k+1}^T[\boldsymbol{Y}^{(i)}_{t,T}-\boldsymbol{Y}^{(i)}_t]I_{\boldsymbol{X}_t\in {E_j}_T})^2\\
&+\frac{2^q}{T}\mathbb{E}([1-\frac{k}{T}]\sum_{t=1}^k [\boldsymbol{Y}^{(i)}_{t,T}-\boldsymbol{Y}^{(i)}_t]I_{\boldsymbol{X}_t\in G_T}-\frac{k}{T}\sum_{t=k+1}^T[\boldsymbol{Y}^{(i)}_{t,T}-\boldsymbol{Y}^{(i)}_t]I_{\boldsymbol{X}_t\in G_T})^2\\
&=A_1+\ldots A_n+B+B_1+\ldots+B_l+E_1+\ldots +E_r+G
\end{align*}
where $q=n+l+r+2$.
Let without loss of generality $D_{\boldsymbol{\sigma}}=I_p$ and $\boldsymbol{\mu}=\boldsymbol{0}$. First we treat $G$, since $\Psi$ is 2 times continuous differentiable in $G_T$ we use a Taylor expansion of first order: 
\begin{align}\label{Taylor}\Psi\left(D^{-1}_{\hat{\boldsymbol{\sigma}}}[\boldsymbol{X}_i-\hat{\boldsymbol{\mu}}]\right)^{(k)}&=\Psi\left(\boldsymbol{X}_i+ [D_{\hat{\boldsymbol{\sigma}}}-I_p]^{-1}\boldsymbol{X}_i- D_{\hat{\boldsymbol{\sigma}}}\hat{\boldsymbol{\mu}}\right)^{(k)}\\
&=\Psi(\boldsymbol{X}_i)'([D_{\hat{\boldsymbol{\sigma}}}-I_p]^{-1}\boldsymbol{X}_i- D_{\hat{\boldsymbol{\sigma}}}\hat{\boldsymbol{\mu}})\\
&+[(D_{\hat{\boldsymbol{\sigma}}}-I_p)\boldsymbol{X}_i-D_{\hat{\boldsymbol{\sigma}}}\hat{\boldsymbol{\mu}}]^T R_T(\boldsymbol{X}_i)[(D_{\hat{\boldsymbol{\sigma}}}-I_p)\boldsymbol{X}_i-D_{\hat{\boldsymbol{\sigma}}}\hat{\boldsymbol{\mu}}]
\end{align}
where $R_T(\boldsymbol{X}_i)$ depends on $\hat{\boldsymbol{\mu}}$ and $\hat{\boldsymbol{\sigma}}.$ Since $\boldsymbol{x}^T{\Psi^{(i)}}''(\boldsymbol{x})\boldsymbol{x}\leq C_2$, $\boldsymbol{X}_i^TR_T(\boldsymbol{X}_i)\boldsymbol{X}_i$ is bounded. In the following we abbreviate $\frac{\partial \Psi(\boldsymbol{x})^{(i)}}{\partial x_j}|_{\boldsymbol{x}=\boldsymbol{y}}$ as ${\Psi_j^{(i)}}'(\boldsymbol{y})$ 
\begin{align*}
G= \frac{2^q}{T}\mathbb{E}&\left(\left[ \{1-\frac{k}{T}\}\sum_{t=1}^k\Psi'(\boldsymbol{X}_i)^{(i)}I_{\{\boldsymbol{X_t}\in G_T\}}[(D_{\hat{\boldsymbol{\sigma}}}-I_p)\boldsymbol{X}_t-D_{\hat{\boldsymbol{\sigma}}}\hat{\boldsymbol{\mu}}]\right.\right. \\
&+\{1-\frac{k}{T}\}\sum_{t=1}^k [(D_{\hat{\boldsymbol{\sigma}}}-I_p)\boldsymbol{X}_t-D_{\hat{\boldsymbol{\sigma}}}\hat{\boldsymbol{\mu}}]^T R_T(\boldsymbol{X}_t)[(D_{\hat{\boldsymbol{\sigma}}}-I_p)\boldsymbol{X}_t-D_{\hat{\boldsymbol{\sigma}}}\hat{\boldsymbol{\mu}}]I_{\{\boldsymbol{X_t}\in G_T\}}\\
&-\frac{k}{T}\sum_{t=1}^k\Psi'(\boldsymbol{X}_t)^{(i)}I_{\{\boldsymbol{X_t}\in D_T\}}[(D_{\hat{\boldsymbol{\sigma}}}-I_p)\boldsymbol{X}_t-G_{\hat{\boldsymbol{\sigma}}}\hat{\boldsymbol{\mu}}]\\
&\left.\left.-\frac{k}{T}\sum_{t=k+1}^T  [(D_{\hat{\boldsymbol{\sigma}}}-I_p)\boldsymbol{X}_t-D_{\hat{\boldsymbol{\sigma}}}\hat{\boldsymbol{\mu}}]^T R_T(\boldsymbol{X}_t)[(D_{\hat{\boldsymbol{\sigma}}}-I_p)\boldsymbol{X}_t-D_{\hat{\boldsymbol{\sigma}}}\hat{\boldsymbol{\mu}}]I_{\{\boldsymbol{X_t}\in G_T\}}\right]^2\right)\\
&\leq \frac{8\cdot 2^q}{T}\mathbb{E}\left(\left\{\sum_{j=1}^p \left(\frac{1}{\hat{\sigma}}^{(j)}-1\right)\left(\{1-\frac{k}{T}\}\sum_{t=1}^k \left[{\Psi_j^{(i)}}'(\boldsymbol{X}_t)X_t^{(j)}I_{\{\boldsymbol{X}_t\in G_T\}}\right.\right.\right.\right.\\
&-\left. \mathbb{E}\left({\Psi_j^{(i)}}'(\boldsymbol{X}_1)X_1^{(j)}I_{\{\boldsymbol{X}_1\in G_T\}}\right)\right]\\
&\left.\left.\left.-\frac{k}{T}\sum_{t={k+1}}^T \left[{\Psi_j^{(i)}}'(\boldsymbol{X}_t)X_t^{(j)}I_{\{\boldsymbol{X_i}\in G_T\}}-\mathbb{E}\left({\Psi_j^{(i)}}'(\boldsymbol{X}_1)X_1^{(j)}I_{\{\boldsymbol{X_1}\in G_T\}}\right)\right]\right)\right\}^2\right)\\
&+\frac{8 \cdot 2^q}{T}\mathbb{E}\left(\left\{\sum_{j=1}^p \frac{1}{\hat{\sigma}}^{(j)}\hat{\mu}^{(j)}\left(\frac{k}{T}\sum_{t=1}^k \left[{\Psi_j^{(i)}}'(\boldsymbol{X}_t)I_{\{\boldsymbol{X_t}\in G_T\}}-\mathbb{E}\left({\Psi_j^{(i)}}'(\boldsymbol{X}_1)I_{\{\boldsymbol{X_1}\in G_T\}} \right)\right.\right.\right.\right.\\
&\left.\left.\left.-\frac{k}{T}\sum_{i=1}^k \left[{\Psi_j^{(i)}}'(\boldsymbol{X}_t)I_{\{\boldsymbol{X_t}\in G_T\}}-\mathbb{E}\left({\Psi_j^{(i)}}'(\boldsymbol{X}_1)I_{\{\boldsymbol{X_1}\in D_T\}} \right)\right]\right)\right\}^2\right)\\
&+\frac{8\cdot 2^q}{T}\mathbb{E}\left(\left\{\{1-\frac{k}{T}\}\sum_{t=1}^k  [(D_{\hat{\boldsymbol{\sigma}}}-I_p)\boldsymbol{X}_t-D_{\hat{\boldsymbol{\sigma}}}\hat{\boldsymbol{\mu}}]^T R_T(\boldsymbol{X}_t)\right.\right.\\
&\left.\left.[(D_{\hat{\boldsymbol{\sigma}}}-I_p)\boldsymbol{X}_t-D_{\hat{\boldsymbol{\sigma}}}\hat{\boldsymbol{\mu}}]I_{\{\boldsymbol{X_t}\in G_T\}}\right\}^2\right)\\
&+\frac{8\cdot 2^q}{T}\mathbb{E}\left(\left\{\frac{k}{T}\sum_{t=k+1}^T  [(D_{\hat{\boldsymbol{\sigma}}}-I_p)\boldsymbol{X}_t-D_{\hat{\boldsymbol{\sigma}}}\hat{\boldsymbol{\mu}}]^T R_T(\boldsymbol{X}_t)\right.\right.\\
&\left.\left.[(D_{\hat{\boldsymbol{\sigma}}}-I_p)\boldsymbol{X}_t-D_{\hat{\boldsymbol{\sigma}}}\hat{\boldsymbol{\mu}}]I_{\{\boldsymbol{X_t}\in G_T\}}\right\}^2\right)\\
&=G_1+G_2+G_3+G_4.
\end{align*}
For $G_1$ we get
\begin{align*}
G_1&\leq\frac{8\cdot 2^q}{T}2^{p+1}(1- \frac{k}{T})^2\sum_{j=1}^p\mathbb{E}\left(\left\{\left(\frac{1}{\hat{\sigma}}^{(j)}-1\right)\sum_{t=1}^k \left[{\Psi_j^{(i)}}'(\boldsymbol{X}_t)X_t^{(j)}I_{\{\boldsymbol{X_t}\in G_T\}}\right.\right.\right.\\
&\left.\left.\left.- \mathbb{E}\left({\Psi_j^{(i)}}'(\boldsymbol{X}_1)X_1^{(j)}I_{\{\boldsymbol{X}_1\in G_T\}}\right)\right]\right\}^2\right)\\
&+\frac{8\cdot 2^q}{T}2^{p+1}(\frac{k}{T})^2\sum_{j=1}^p\mathbb{E}\left(\left\{\left(\frac{1}{\hat{\sigma}}^{(j)}-1\right)\sum_{t=k+1}^T \left[{\Psi_j^{(i)}}'(\boldsymbol{X}_t)X_t^{(j)}I_{\{\boldsymbol{X_t}\in G_T\}}\right.\right.\right.\\
&\left.\left.\left.- \mathbb{E}\left({\Psi_j^{(i)}}'(\boldsymbol{X}_1)X_1^{(j)}I_{\{\boldsymbol{X_1}\in G_T\}}\right)\right]\right\}^2\right)=G_{1.1}+G_{1.2}.
\end{align*}
In the following we look only at one summand $j\in \{1,\ldots,p\}:$
\begin{align*}
\mathbb{E}&\left(\left\{\left(\frac{1}{\hat{\sigma}}^{(j)}-1\right)\sum_{t=1}^k \left[{\Psi_j^{(i)}}'(\boldsymbol{X}_t)X_t^{(j)}I_{\{\boldsymbol{X}_t\in G_T\}}- \mathbb{E}\left({\Psi_j^{(i)}}'(\boldsymbol{X}_1)X_1^{(j)}I_{\{\boldsymbol{X}_1\in G_T\}}\right)\right]\right\}^2\right)\\
&\leq \sqrt{\frac{1}{T^{1-2\epsilon}}\mathbb{E}\left(\left\{\sum_{t=1}^k \left[{\Psi_j^{(i)}}'(\boldsymbol{X}_t)X_t^{(j)}I_{\{\boldsymbol{X}_i\in G_T\}}- \mathbb{E}\left({\Psi_j^{(i)}}'(\boldsymbol{X}_1)X_1^{(j)}I_{\{\boldsymbol{X}_1\in G_T\}}\right)\right]\right\}^2\right)}\\
&\leq \sqrt{\frac{1}{T^{1-2\epsilon}} C T}.
\end{align*}
Together with the factor $\frac{32}{T}2^{p+1}(1- \frac{k}{T})^2$ this converges to 0. We get the same result for the remaining summands and $G_{1.2}.$ For $G_3$ and $G_4$ we use the boundedness-condition:
\begin{align*}
G_3&=\frac{8\cdot 2^q}{T}\mathbb{E}\left(\left\{\{1-\frac{k}{T}\}\sum_{t=1}^k  [(D_{\hat{\boldsymbol{\sigma}}}-I_p)\boldsymbol{X}_t-D_{\hat{\boldsymbol{\sigma}}}\hat{\boldsymbol{\mu}}]^T R_T(\boldsymbol{X}_t)\right.\right.\\
&\left.\left.[(D_{\hat{\boldsymbol{\sigma}}}-I_p)\boldsymbol{X}_t-D_{\hat{\boldsymbol{\sigma}}}\hat{\boldsymbol{\mu}}]I_{\{\boldsymbol{X_t}\in G_T\}}\right\}^2\right)\\
&\leq \frac{8\cdot 2^q}{T}\left(\sum_{i=1}^k C T^{-1+2\epsilon}\right)\rightarrow 0.
\end{align*}
Now we look at $B:$
\begin{align*}
C&\leq \frac{2^{q+1}}{T}\mathbb{E}\left(\left[\sum_{t=1}^k \{Y_{t,T}^{(i)}-Y_T^{(i)}\} I_{\boldsymbol{X}_t\in B_T}\right]^2\right)+\frac{2^{q+1}}{T}\mathbb{E}\left(\left[\sum_{t=k+1}^T \{Y_{t,T}^{(i)}-Y_T^{(i)}\} I_{\boldsymbol{X}_t\in B_T}\right]^2\right)\\
& \leq \frac{2^{q+1}}{T}\sum_{t=1}^T\sum_{s=1}^T LT^{-\frac{1}{2}+\epsilon}LT^{-\frac{1}{2}+\epsilon}\mathbb{E}(I_{\boldsymbol{X}_s\in B_T}I_{\boldsymbol{X}_t\in B_T})
\leq \frac{2^{q+2}}{T}T^2T^{-\frac{3}{2}+3\epsilon}\rightarrow 0.
\end{align*}
For $j\in \{1,\ldots,n\}$ arbitrary we look at
\begin{align*}
B_j&\leq \frac{2^{q+1}}{T}\mathbb{E}\left(\left[\sum_{t=1}^k \{Y_{t,T}^{(i)}-Y_T^{(i)}\} I_{\boldsymbol{X}_t\in {B_j}_T}\right]^2\right)+\frac{2^{q+1}}{T}\mathbb{E}\left(\left[\sum_{t=k+1}^T \{Y_{t,T}^{(i)}-Y_T^{(i)}\} I_{\boldsymbol{X}_t\in {B_j}_T}\right]^2\right)\\
&=B_{j,1}+B{j,2}
\end{align*}
and get for $B_{j,1}:$
\begin{align*}
B_{j,1}&\leq\frac{2^{q+1}2^p}{T}\sum_{j=1}^p\mathbb{E}\left(\left[\sum_{t=1}^k \left\{\Psi^{(i)}\left(\frac{X_t^{(1)}-\hat{\mu}^{(1)}}{\hat{\sigma}^{(1)}},\ldots,\frac{X_t^{(j)}-\hat{\mu}^{(j)}}{\hat{\sigma}^{(j)}},\right.\right.\right.\right.\\
&\left.\left.\left.\left.\frac{X_t^{(j+1)}-\mu^{(j+1)}}{\sigma^{(j+1)}},\ldots,\frac{X_t^{(p)}-\mu^{(p)}}{\sigma^{(p)}}\right)\right.\right.\right.\\
&\left.\left.\left.-\Psi^{(i)}\left(\frac{X_t^{(1)}-\hat{\mu}^{(1)}}{\hat{\sigma}^{(1)}},\ldots,\frac{X_t^{(j-1)}-\hat{\mu}^{(j-1)}}{\hat{\sigma}^{(j-1)}},\frac{X_t^{(j)}-\mu^{(j)}}{\sigma^{(j)}},\ldots,\frac{X_t^{(p)}-\mu^{(p)}}{\sigma^{(p)}}\right)\right\}I_{\boldsymbol{X}_t\in {B_j}_T}\right]^2\right)\\
&\leq \frac{2^{q+p+1}}{T}pk^2LT^{-\frac{3}{2}+3\epsilon}\rightarrow 0.
\end{align*}
For $j\in \{1,\ldots,n\}$ arbitrary we look at $A_j$
\begin{align*}
A_j&\leq  \frac{2^{q+1}}{T}\mathbb{E}\left(\left[\sum_{t=1}^k \{\boldsymbol{Y}^{(i)}_{t,T}-\boldsymbol{Y}^{(i)}_T\} I_{\boldsymbol{X}_t\in A_\epsilon(a_j)}\right]^2\right)\\
&+\frac{2^{q+1}}{T}\mathbb{E}\left(\left[\sum_{t=k+1}^T \{\boldsymbol{Y}^{(i)}_{t,T}-\boldsymbol{Y}^{(i)}_T\} I_{\boldsymbol{X}_t\in A_\epsilon(a_j)}\right]^2\right)=A_{j,1}+A_{j,2}
\end{align*}
where for $A_{j,1}$ we have
\begin{align*}
\leq & \frac{2^{q+1}}{T}\mathbb{E}\left(\left[\sum_{t=1}^k \{\boldsymbol{Y}^{(i)}_{t,T}-\boldsymbol{Y}^{(i)}_T\} I_{|\boldsymbol{X}_t-a_j|\leq T^{-\frac{1}{2p}-\delta_1}}\right]^2\right)\\
=&\frac{2^{q+1}}{T}\mathbb{E}\left(\left[\sum_{t=1}^k \{\boldsymbol{Y}^{(i)}_{t,T}-\boldsymbol{Y}^{(i)}_T\} I_{T^{-\frac{1}{2p}-\delta_1}<|\boldsymbol{X}_t-a_j|<\epsilon}\right]^2\right)=A_{j,1,1}+A_{j,1,2}
\end{align*}
For $A_{j,1,1}$ we use that $P(|\boldsymbol{X}_t-a_j|\leq T^{-\frac{1}{2p}-\delta_1})\leq C \cdot T^{-\frac{1}{2}-\delta_1}$ to show that $A_{j,1,1}\rightarrow 0:$
\begin{align*}
\frac{2^{q+1}}{T}&\mathbb{E}\left(\left[\sum_{t=1}^k \{\boldsymbol{Y}^{(i)}_{t,T}-\boldsymbol{Y}^{(i)}_T\} I_{|\boldsymbol{X}_t-A|\leq T^{-\frac{1}{2p}-\delta_1}}\right]^2\right)
\\
&\leq \frac{2^{q+1}}{T}\mathbb{E}\left(\left[\sum_{t=1}^k 2K I_{|\boldsymbol{X}_t-a_j|\leq T^{-\frac{1}{2p}-\delta_1}}\right]^2\right)\\
&\leq \frac{2^{q+2}K^2}{T}\mathbb{E}\left(\left[\sum_{t=1}^k I_{|\boldsymbol{X}_t-a_j|\leq T^{-\frac{1}{2p}-\delta_1}}-P(|\boldsymbol{X}_1-a_j|\leq T^{-\frac{1}{2p}-\delta_1})\right]^2\right)\\
&+\frac{2^{q+2}K^2k^2}{T}P(|\boldsymbol{X}_1-a_j|\leq T^{-\frac{1}{2p}-\delta_1})^2\\
&\leq \frac{2^{q+2}K^2}{T}C_1 C_2\cdot T^{-1-\delta_1}+\frac{2^{q+2}K^2k^2}{T}C_2\cdot T^{-1-\delta_1}\rightarrow 0
\end{align*}
For $A_{1,2}$ we use a Taylor-decomposition and get analogously to the decomposition of $D,$ $A_{1,2,1},~A_{1,2,2},~A_{1,2,3}$ and $A_{1,2,4}$ where we split $A_{1,2,1}$ into $A_{1,2,1,1}$ and $A_{1,2,1,2}$ similar to $D_1$ which is split into $D_{1,1}$ and $D_{1,2}.$ So  
$A_{1,2,1,1}$ has the following form:
\begin{align*}
A_{1,2,1,1}&=\frac{2^{q+4}}{T}2^{p+1}(1- \frac{k}{T})^2\sum_{m=1}^p\mathbb{E}\left(\left\{\left(\frac{1}{\hat{\sigma}}^{(m)}-1\right)\sum_{t=1}^k \left[
{\Psi^{(i)}_m}'(\boldsymbol{X}_t)X_t^{(m)}I_{\{T^{-\frac{1}{2p}-\delta_1}<|\boldsymbol{X}_t-a_j|<\epsilon\}}\right.\right.\right.\\
&\left.\left.\left.- \mathbb{E}\left({\Psi^{(i)}_m}'(\boldsymbol{X}_1)X_1^{(j)}I_{\{T^{-\frac{1}{2p}-\delta_1}<|\boldsymbol{X}_1-a_j|<\epsilon\}}\right)\right]\right\}^2\right)
\end{align*}
For every summand $m$ we get
\begin{align*}
\mathbb{E}&\left(\left\{\left(\frac{1}{\hat{\sigma}}^{(m)}-1\right)\sum_{t=1}^k \left[{\Psi^{(i)}_m}'(\boldsymbol{X}_t)X_t^{(m)}I_{\{T^{-\frac{1}{2p}-\delta_1}<|\boldsymbol{X}_t-a_j|<\epsilon\}}\right.\right.\right.\\
&\left.\left.\left.- \mathbb{E}\left({\Psi^{(i)}_m}'(\boldsymbol{X}_1)X_1^{(j)}I_{\{T^{-\frac{1}{2p}-\delta_1}<|\boldsymbol{X}_1-a_j|<\epsilon\}}\right)\right]\right\}^2\right)\\
&\leq \left[\frac{1}{T^{1-2\delta_2}}\mathbb{E}\left(\left\{\sum_{t=1}^k \left[{\Psi^{(i)}_m}'(\boldsymbol{X}_t)X_t^{(m)}I_{\{T^{-\frac{1}{2p}-\delta_1}<|\boldsymbol{X}_t-a_j|<\epsilon\}}\right.\right.\right.\right.\\
&\left.\left.\left.\left.- \mathbb{E}\left({\Psi^{(i)}_m}'(\boldsymbol{X}_1)X_1^{(m)}I_{\{T^{-\frac{1}{2p}-\delta_1}<|\boldsymbol{X}_1-a_j|<\epsilon\}}\right)\right]\right\}^2\right)\right]^\frac{1}{2}\\
&\leq \sqrt{\frac{1}{T^{1-2\delta_2}} C T \mathbb{E}\left(\left[{\Psi^{(i)}_m}'(\boldsymbol{X}_t)X_1^{(m)}I_{\{T^{-\frac{1}{2p}-\delta_1}<|\boldsymbol{X}_1|<\epsilon\}}\right]^2\right)}\\
&\leq \sqrt{\frac{1}{T^{1-2\epsilon}} C T^{2-\delta}}.
\end{align*}
Together with the remaining factor  $\frac{2^{q+4}}{T}2^{p+1}(1-\frac{k}{T})^2$  this converges to 0. Analogous calculations shows convergence for the other summands.\\
Finally we look at $E_1,\ldots,E_r$. To shorten notation we assume $E_1=\{\boldsymbol{x}\in \mathbb{R}^p:x_1=0\}.$ Define by $N_{s}=\{x \in \mathbb{R}: \frac{s-1}{T^{\frac{3}{2}}}\leq x <\frac{s}{T^{\frac{3}{2}}}\}$ for $s=-T,\ldots,T$, then we decompose $E_1$ further:
\begin{align*}
\frac{2^q}{T}&\mathbb{E}\left([1-\frac{k}{T}]\sum_{t=1}^k [\boldsymbol{Y}_{t,T}^{(i)}-\boldsymbol{Y}_t^{(i)}]I_{Y_t\in E_{1,T}}-\frac{k}{T}\sum_{t=k+1}^T[\boldsymbol{Y}_{t,T}^{(i)}-\boldsymbol{Y}_t^{(i)}]I_{Y_t\in E_{1,T}}\right)^2\\
&=\frac{2^q}{T}\mathbb{E}\left([1-\frac{k}{T}]\sum_{s=-T}^T I_{\hat{\mu}^{(1)}\in N_s}\left(\sum_{t=1}^k \sum_{v=-T}^T [\boldsymbol{Y}_{t,T}^{(i)}-\boldsymbol{Y}^{(i)}_t]I_{\boldsymbol{X}_t\in N_v}\right.\right.\\
&\left.\left.-\frac{k}{T}\sum_{t=k+1}^T\sum_{s=-T}^T[\boldsymbol{Y}^{(i)}_{t,T}-\boldsymbol{Y}_t^{(i)}]I_{\boldsymbol{X}_t\in N_v}\right)\right)^2\\
&=\frac{2^q}{T}\mathbb{E}([1-\frac{k}{T}]\sum_{s=-T}^TI_{\hat{\mu}\in N_s}\left(\sum_{t=1}^k \sum_{v=-T}^T [\boldsymbol{Y}^{(i)}_{t,T}-\boldsymbol{Y}^{(i)}_t]I_{\boldsymbol{X}_t\in N_v}\right.\\
&-\mathbb{E}\left([\boldsymbol{Y}_{1,T}^{(i)}-\boldsymbol{Y}^{(i)}_1]I_{\boldsymbol{X}_1\in N_v}\right)\\
&\left.\left.-\frac{k}{T}\sum_{t=k+1}^T\sum_{v=-T}^T[\boldsymbol{Y}^{(i)}_{t,T}-\boldsymbol{Y}^{(i)}_t]I_{\boldsymbol{X}_t\in N_v}-\mathbb{E}\left([\boldsymbol{Y}^{(i)}_{1,T}-\boldsymbol{Y}^{(i)}_1]I_{\boldsymbol{X}_1\in N_v}\right)\right)\right)^2\\
&\leq  \frac{2^q}{T}\mathbb{E}\left(\sum_{s=-T}^T I_{\hat{\mu}^{(1)}\in N_v}\left(\sum_{t=1}^k \sum_{v=-T}^T [\boldsymbol{Y}^{(i)}_{t,T}-\boldsymbol{Y}^{(i)}_t]I_{\boldsymbol{X}_t\in N_v}-\mathbb{E}\left([\boldsymbol{Y}_{1,T}^{(i)}-\boldsymbol{Y}^{(i)}_1]I_{\boldsymbol{X}_1\in N_v}\right)\right)\right)^2\\
+&\frac{2^q}{T}\mathbb{E}\left(\sum_{s=-T}^TI_{\hat{\mu}^{(1)}\in N_s}\left(\sum_{t=k+1}^T \sum_{v=-T}^T [\boldsymbol{Y}^{(i)}_{t,T}-\boldsymbol{Y}^{(i)}_t]I_{\boldsymbol{X}_t\in N_v}-\mathbb{E}\left([\boldsymbol{Y}^{(i)}_{1,T}-\boldsymbol{Y}^{(i)}_1]I_{\boldsymbol{X}_1\in N_v}\right)\right)\right)^2\\
&=C_1+C_2
\end{align*}
Depending in which sector $\hat{\boldsymbol{\mu}}$ and $\boldsymbol{X}_t$ lie most summands are 0. So for example $C_1$ simplifies to
\begin{align*}
&\frac{2^{q+1}}{T}\mathbb{E}\left(\sum_{s=-T}^TI_{\hat{\mu}^{(1)} \in N_s}\left(\sum_{t=1}^k \sum_{v=-T}^T [\boldsymbol{Y}_{t,T}-\boldsymbol{Y}_t]I_{\boldsymbol{X}_t\in N_v}-\mathbb{E}\left([\boldsymbol{Y}_{1,T}-\boldsymbol{Y}_1]I_{\boldsymbol{X}_1\in N_v}\right)\right)\right)^2\\
&\leq \frac{2^{q+1}4C^2}{T}\mathbb{E}\left(\sum_{s=-T}^TI_{\hat{\mu}^{(1)} \in N_s}\left(\sum_{t=1}^kI_{\boldsymbol{X}_t\in N_{s-1}\cup N_s\cup N_{s+1}}\right)\right)^2\\
&= \frac{2^{q+1}4C^2}{T}\mathbb{E}\left(\sum_{s}^T\sum_{t_1,t_2=1}^kI_{\hat{\mu}^{(1)} \in N_{s}}I_{\boldsymbol{X}_{t_1}\in N_{s-1}\cup N_{s}\cup N_{s+1}}I_{\boldsymbol{X}_{t_2}\in N_{s-1}\cup N_{s}\cup N_{s+1}}\right)\\
&= \frac{2^{q+1}36C^2}{T}\mathbb{E}\left(\sum_{s}^T\sum_{t_1,t_2=1}^kI_{\boldsymbol{X}_{t_1}\in E_{1,T}}I_{\boldsymbol{X}_{t_2}\in E_{1,T}}\right)\\
&\leq \frac{2^{q+1}36C^2} T^{-\frac{1}{2}+\epsilon}T^{-\frac{1}{2}+\epsilon}\rightarrow 0
\end{align*}
Now we want to show that $(\boldsymbol{R}_T(x))_{x\in [0,1]}$ respectively every component of it, is tight. We use the moment criteria by \cite{Billingsley} with $\gamma=4$ and $\alpha=2$ so we have to show that
\begin{align*}
\mathbb{E}(|R_n(y)^{(i)}-R_n(x)^{(i)}|^4)\leq C_5|y-x|^2,~\forall~x,y\in [0,1],~T\in \mathbb{N}.
\end{align*}
We denote $k=\lfloor Tx\rfloor>j=\lfloor Ty \rfloor$ and follow the same ideas as before, namely splitting the sums by the $c_r$ inequality into $A_1+\ldots A_n+B+B_1+\ldots+B_l+E_1+\ldots +E_r+G$ and treating them as before. The only difference now is that we have to look at 4-th moments instead of 2nd moments.
\begin{align*}
\mathbb{E}&|R_T(y)^{(i)}-R_T(x)^{(i)}|^4\\
&=\frac{1}{T^2}\mathbb{E}\bigg|\left(1-\frac{k}{T}\right)\sum_{t=1}^k\left( Y_{t,T}^{(i)}-Y_{t}^{(i)}\right)-\frac{k}{T}\sum_{t=k+1}^T\left( Y_{t,T}^{(i)}-Y_t^{(i)}\right)\\
&-\left(1-\frac{j}{T}\right)\sum_{t=1}^j \left(Y_{t,T}^{(i)}-Y_{t}^{(i)}\right)+\frac{j}{T}\sum_{t=j+1}^T \left(Y_{t,T}^{(i)}-Y_t^{(i)}\right)\bigg|^4\\
&\leq\frac{8^q}{T^2}\sum_{j=1}^n\mathbb{E}\bigg|\left(1-\frac{k}{T}\right)\sum_{t=1}^k\left( Y_{t,T}^{(i)}-Y_{t}^{(i)}\right)I_{\{X_t \in A_T\}}-\frac{k}{T}\sum_{t=k+1}^T\left( Y_{t,T}^{(i)}-Y_t^{(i)}\right)I_{\{X_t \in A_{\epsilon}(a_j)\}}\\
&-\left(1-\frac{j}{T}\right)\sum_{t=1}^j \left(Y_{t,T}^{(i)}-Y_{t}^{(i)}\right)I_{\{X_t \in A_T\}}+\frac{j}{T}\sum_{t=j+1}^T \left(Y_{t,T}^{(i)}-Y_t^{(i)}\right)I_{\{X_t \in A_{\epsilon}(a_j)\}}\bigg|^4\\
&\leq\frac{8^q}{T^2}\mathbb{E}\bigg|\left(1-\frac{k}{T}\right)\sum_{t=1}^k\left( Y_{t,T}^{(i)}-Y_{t}^{(i)}\right)I_{\{X_t \in B_T\}}-\frac{k}{T}\sum_{t=k+1}^T\left( Y_{t,T}^{(i)}-Y_t^{(i)}\right)I_{\{X_t \in B_T\}}\\
&-\left(1-\frac{j}{T}\right)\sum_{t=1}^j \left(Y_{t,T}^{(i)}-Y_{t}^{(i)}\right)I_{\{X_t \in B_T\}}+\frac{j}{T}\sum_{t=j+1}^T \left(Y_{t,T}^{(i)}-Y_t^{(i)}\right)I_{\{X_t \in B_T\}}\bigg|^4\\
&\leq\frac{8^q}{T^2}\sum_{j=1}^l\mathbb{E}\bigg|\left(1-\frac{k}{T}\right)\sum_{t=1}^k\left( Y_{t,T}^{(i)}-Y_{t}^{(i)}\right)I_{\{X_t \in B_{j_T}\}}-\frac{k}{T}\sum_{t=k+1}^T\left( Y_{t,T}^{(i)}-Y_t^{(i)}\right)I_{\{X_t \in B_{j_T}\}}\\
&-\left(1-\frac{j}{T}\right)\sum_{t=1}^j \left(Y_{t,T}^{(i)}-Y_{t}^{(i)}\right)I_{\{X_t \in E_{j_T}\}}+\frac{j}{T}\sum_{t=j+1}^T \left(Y_{t,T}^{(i)}-Y_t^{(i)}\right)I_{\{X_t \in E_{j_T}\}}\bigg|^4\\
&\leq\frac{8^q}{T^2}\sum_{j=1}^l\mathbb{E}\bigg|\left(1-\frac{k}{T}\right)\sum_{t=1}^k\left( Y_{t,T}^{(i)}-Y_{t}^{(i)}\right)I_{\{X_t \in E_{j_T}\}}-\frac{k}{T}\sum_{t=k+1}^T\left( Y_{t,T}^{(i)}-Y_t^{(i)}\right)I_{\{X_t \in E_{j_T}\}}\\
&-\left(1-\frac{j}{T}\right)\sum_{t=1}^j \left(Y_{t,T}^{(i)}-Y_{t}^{(i)}\right)I_{\{X_t \in E_{j_T}\}}+\frac{j}{T}\sum_{t=j+1}^T \left(Y_{t,T}^{(i)}-Y_t^{(i)}\right)I_{\{X_t \in E_{j_T}\}}\bigg|^4\\
&\leq\frac{8^q}{T^2}\mathbb{E}\bigg|\left(1-\frac{k}{T}\right)\sum_{t=1}^k\left( Y_{t,T}^{(i)}-Y_{t}^{(i)}\right)I_{\{X_t \in G_T\}}-\frac{k}{T}\sum_{t=k+1}^T\left( Y_{t,T}^{(i)}-Y_t^{(i)}\right)I_{\{X_t \in G_T\}}\\
&-\left(1-\frac{j}{T}\right)\sum_{t=1}^j \left(Y_{t,T}^{(i)}-Y_{t}^{(i)}\right)I_{\{X_t \in G_T\}}+\frac{j}{T}\sum_{t=j+1}^T \left(Y_{t,T}^{(i)}-Y_t^{(i)}\right)I_{\{X_t \in G_T\}}\bigg|^4\\
&=\frac{8^q}{T^2}(A_1+\ldots A_n+B+B_1+\ldots+B_l+E_1+\ldots +E_r+G).
\end{align*} 
We look exemplarily at $G$:
\begin{align*}
G&\leq 8^{12}\left[\frac{j-k}{T}\right]^4\mathbb{E}\left(\left\{\sum_{j=1}^p \left(\frac{1}{\hat{\sigma}^{(j)}}-1\right)\sum_{t=1}^k \left[{\Psi^{(i)}_j}'(\boldsymbol{X}_t)X_t^{(j)}I_{\{\boldsymbol{X_i}\in G_T\}}\right.\right.\right.\\
&-\left.\left.\left. \mathbb{E}\left({\Psi^{(i)}_j}'(\boldsymbol{X}_1)X_1^{(j)}I_{\{\boldsymbol{X_1}\in D_T\}}\right)\right]\right\}^4\right)\\
&+8^{12}\left[\frac{T-j}{T}\right]^4\mathbb{E}\left(\left\{\sum_{j=1}^p \left(\frac{1}{\hat{\sigma}^{(j)}}-1\right)\sum_{t=k+1}^j \left[{\Psi^{(i)}_j}'(\boldsymbol{X}_t)X_t^{(j)}I_{\{\boldsymbol{X_i}\in G_T\}}\right.\right.\right.\\
&-\left.\left.\left. \mathbb{E}\left({\Psi^{(i)}_j}'(\boldsymbol{X}_1)X_1^{(j)}I_{\{\boldsymbol{X_1}\in G_T\}}\right)\right]\right\}^4\right)\\
&+8^{12}\left[\frac{j-k}{T}\right]^4\mathbb{E}\left(\left\{\sum_{j=1}^p \left(\frac{1}{\hat{\sigma}^{(j)}}-1\right)\sum_{t=j+1}^T \left[{\Psi^{(i)}_j}'(\boldsymbol{X}_t)I_{\{\boldsymbol{X_t}\in G_T\}}\right.\right.\right.\\
&-\left.\left.\left. \mathbb{E}\left({\Psi^{(i)}_j}'(\boldsymbol{X}_1)I_{\{\boldsymbol{X_1}\in G_T\}}\right)\right]\right\}^4\right)\\
&+8^{12}\left[\frac{k}{T}\right]^4\mathbb{E}\left(\left\{\sum_{j=1}^p \left(\frac{1}{\hat{\sigma}^{(j)}}-1\right)\sum_{t=k+1}^j \left[{\Psi^{(i)}_j}'(\boldsymbol{X}_t)X_t^{(j)}I_{\{\boldsymbol{X}_i\in G_T\}}\right.\right.\right.\\
&-\left.\left.\left. \mathbb{E}\left({\Psi^{(i)}_j}'(\boldsymbol{X}_1)X_1^{(j)}I_{\{\boldsymbol{X}_1\in G_T\}}\right)\right]\right\}^4\right)\\
&+8^{12}\left[\frac{j-k}{T}\right]^4\mathbb{E}\left(\left\{\sum_{j=1}^p \frac{1}{\hat{\sigma}}^{(j)}\hat{\mu}^{(j)}\right.\right.\\
&\left.\left.\left(\frac{k}{T}\sum_{t=1}^k \left[{\Psi^{(i)}_j}'(\boldsymbol{X}_t)I_{\{\boldsymbol{X_i}\in G_T\}}-\mathbb{E}\left({\Psi^{(i)}_j}'(\boldsymbol{X}_1)I_{\{\boldsymbol{X_1}\in G_T\}} \right)\right]\right)\right\}^4\right)\\
&+8^{12}\left[\frac{T-j}{T}\right]^4\mathbb{E}\left(\left\{\sum_{j=1}^p \frac{1}{\hat{\sigma}}^{(j)}\hat{\mu}^{(j)}\right.\right.\\
&\left.\left.\left(\frac{k}{T}\sum_{t=k+1}^j \left[{\Psi^{(i)}_j}'(\boldsymbol{X}_t)I_{\{\boldsymbol{X}_t\in G_T\}}-\mathbb{E}\left({\Psi^{(i)}_j}'(\boldsymbol{X}_t)I_{\{\boldsymbol{X}_1\in G_T\}} \right)\right]\right)\right\}^4\right)\\
&+8^{12}\left[\frac{j-k}{T}\right]^4\mathbb{E}\left(\left\{\sum_{j=1}^p \frac{1}{\hat{\sigma}}^{(j)}\hat{\mu}^{(j)}\right.\right.\\
&\left.\left.\left(\frac{k}{T}\sum_{t=j+1}^T \left[{\Psi^{(i)}_j}'(\boldsymbol{X}_t)I_{\{\boldsymbol{X}_t\in G_T\}}-\mathbb{E}\left({\Psi^{(i)}_j}'(\boldsymbol{X}_t)I_{\{\boldsymbol{X}_1\in G_T\}} \right)\right]\right)\right\}^4\right)\\
&+8^{12}\left[\frac{k}{T}\right]^4\mathbb{E}\left(\left\{\sum_{j=1}^p \frac{1}{\hat{\sigma}}^{(j)}\hat{\mu}^{(j)}\left(\frac{k}{T}\sum_{t=k+1}^j \left[{\Psi^{(i)}_j}'(\boldsymbol{X}_t)I_{\{\boldsymbol{X}_t\in G_T\}}-\mathbb{E}\left({\Psi^{(i)}_j}'(\boldsymbol{X}_1)I_{\{\boldsymbol{X}_1\in G_T\}} \right)\right]\right)\right\}^4\right)\\
&+8^{12}\left[\frac{j-k}{T}\right]^4\mathbb{E}\left(\left\{\sum_{t=1}^k  [(D_{\hat{\boldsymbol{\sigma}}}-I_p)\boldsymbol{X}_t-D_{\hat{\boldsymbol{\sigma}}}\hat{\boldsymbol{\mu}}]^T R_T(\boldsymbol{X}_t)[(D_{\hat{\boldsymbol{\sigma}}}-I_p)\boldsymbol{X}_t-D_{\hat{\boldsymbol{\sigma}}}\hat{\boldsymbol{\mu}}]I_{\{\boldsymbol{X}_t\in G_T\}}\right\}^4\right)\displaybreak\\
&+8^{12}\left[\frac{T-j}{T}\right]^4\mathbb{E}\left(\left\{\sum_{t=k+1}^j  [(D_{\hat{\boldsymbol{\sigma}}}-I_p)\boldsymbol{X}_t-D_{\hat{\boldsymbol{\sigma}}}\hat{\boldsymbol{\mu}}]^T R_T(\boldsymbol{X}_t)[(D_{\hat{\boldsymbol{\sigma}}}-I_p)\boldsymbol{X}_t-D_{\hat{\boldsymbol{\sigma}}}\hat{\boldsymbol{\mu}}]I_{\{\boldsymbol{X}_t\in G_T\}}\right\}^4\right)\\
&+8^{12}\left[\frac{j-k}{T}\right]^4\mathbb{E}\left(\left\{\sum_{t=j+1}^T  [(D_{\hat{\boldsymbol{\sigma}}}-I_p)\boldsymbol{X}_t-D_{\hat{\boldsymbol{\sigma}}}\hat{\boldsymbol{\mu}}]^T R_T(\boldsymbol{X}_t)[(D_{\hat{\boldsymbol{\sigma}}}-I_p)\boldsymbol{X}_t-D_{\hat{\boldsymbol{\sigma}}}\hat{\boldsymbol{\mu}}]I_{\{\boldsymbol{X}_t\in G_T\}}\right\}^4\right)\\
&+8^{12}\left[\frac{k}{T}\right]^4\mathbb{E}\left(\left\{\sum_{t=k+1}^j  [(D_{\hat{\boldsymbol{\sigma}}}-I_p)\boldsymbol{X}_t-D_{\hat{\boldsymbol{\sigma}}}\hat{\boldsymbol{\mu}}]^T R_T(\boldsymbol{X}_t)[(D_{\hat{\boldsymbol{\sigma}}}-I_p)\boldsymbol{X}_t-D_{\hat{\boldsymbol{\sigma}}}\hat{\boldsymbol{\mu}}]I_{\{\boldsymbol{X}_t\in G_T\}}\right\}^4\right)\\
&=8^12\sum_{i=1}^{12} TA_i
\end{align*}
For $TA_1,\ldots,TA_{12}$ one applies the 4th-moment inequality for strongly mixing sequences \citep{merlevede2000functional}. In case of $TA_1$ this yields:
\begin{align*}
TA1\leq (x-y)^2
2^{3p}\sqrt{(T^{-\frac{1}{2}+\epsilon})^4+k^2D}
\end{align*}
and together with the prefactor $1/T^2$ we get the desired bound containing $(x-y)^2.$ Ror the remainder terms of the Taylor series we use the boundedness and get for example for $TA_{12}:$
\begin{align*}
TA_{12}\leq (j-k)^2T^2D(T^{1-2\epsilon})^4
\end{align*} 
which also results together with the prefactor $1/T^2$ an upper bound containing $(x-y)^2$.
\end{proof}
Now we show consistency of the long run variance estimator $\hat{U}:$
\begin{proposition}
Under Assumptions 1-5, $\hat{U}$ is weakly consistent for $U.$
\end{proposition}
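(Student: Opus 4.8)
\emph{Plan.} Since $\hat U$ is built from the transformed observations $\boldsymbol Y_{i,T}$, which still involve the estimated standardisation $\hat{\boldsymbol\mu},\hat{\boldsymbol\sigma}$, the natural route is to peel this off and reduce to a classical kernel (HAC) consistency statement. Set
\[
\tilde U=\frac1T\sum_{i,j=1}^{T}\bigl(\boldsymbol Y_i-\overline{\boldsymbol Y}\bigr)\bigl(\boldsymbol Y_j-\overline{\boldsymbol Y}\bigr)^{T}k\!\left(\frac{|i-j|}{b_T}\right),\qquad \overline{\boldsymbol Y}=\frac1T\sum_{i=1}^{T}\boldsymbol Y_i,
\]
for the infeasible version built from $\boldsymbol Y_i=\Psi(D_{\boldsymbol\sigma}^{-1}(\boldsymbol X_i-\boldsymbol\mu))$. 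It then suffices to prove (i) $\tilde U\stackrel{p}{\to}U$ and (ii) $\hat U-\tilde U\stackrel{p}{\to}0$ entrywise; weak consistency follows. Note that positive definiteness of $U$ (the nondegeneracy part of Assumption~\ref{Grundassum}) is not needed here, only for the later inversion.

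\emph{Step (i).} The sequence $(\boldsymbol Y_i)_{i\in\mathbb N}$ is bounded (hence has moments of every order), stationary, and strongly mixing with the same mixing coefficients as $(\boldsymbol X_i)_{i\in\mathbb N}$, because $\sigma(\boldsymbol Y_i)\subseteq\sigma(\boldsymbol X_i)$; in particular Assumption~1 supplies far more summability of the $a_k$ than required. Together with the kernel conditions of Assumption~\ref{assukern} this is precisely the framework of \cite{jong2000consistency}, whose result yields $\tilde U\stackrel{p}{\to}\mathrm{Var}(\boldsymbol Y_1)+2\sum_{h\ge1}\mathrm{Cov}(\boldsymbol Y_1,\boldsymbol Y_{1+h})=U$.

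\emph{Step (ii).} Put $\boldsymbol Z_{i,T}=\boldsymbol Y_{i,T}-\boldsymbol Y_i$ and $\overline{\boldsymbol Z}_T=T^{-1}\sum_i\boldsymbol Z_{i,T}$, so that $\boldsymbol Y_{i,T}-\overline{\boldsymbol Y}_T=(\boldsymbol Y_i-\overline{\boldsymbol Y})+(\boldsymbol Z_{i,T}-\overline{\boldsymbol Z}_T)$ and $\hat U-\tilde U$ splits into two cross terms linear in $\boldsymbol Z_{i,T}-\overline{\boldsymbol Z}_T$ and one term quadratic in it. I would re-use the decomposition from the proof that $(\boldsymbol R_T(x))_{0\le x\le1}\rightarrow 0$: partition $\mathbb R^p$ into the exemption neighbourhoods $A_\epsilon(a_1),\dots,A_\epsilon(a_n),B,B_1,\dots,B_l,E_1,\dots,E_r$ and the remainder $G$. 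On $G$, where $\Psi$ is twice differentiable with the bounds of Assumption~\ref{Grundassum}c), a first-order Taylor expansion of $\Psi(D_{\hat{\boldsymbol\sigma}}^{-1}(\boldsymbol X_i-\hat{\boldsymbol\mu}))$ around $\boldsymbol X_i$, together with $\hat{\boldsymbol\mu}-\boldsymbol\mu=O_P(T^{-1/2})$ and $\hat{\boldsymbol\sigma}-\boldsymbol\sigma=O_P(T^{-1/2})$ (Assumption~3), expresses $\boldsymbol Z_{i,T}$ on $G$ as a finite sum of terms $\theta_{T}^{(\ell)}g_i^{(\ell)}$, with scalar $\theta_T^{(\ell)}=O_P(T^{-1/2})$ (a component of $\hat{\boldsymbol\mu}-\boldsymbol\mu$ or $\hat{\boldsymbol\sigma}-\boldsymbol\sigma$) and $g_i^{(\ell)}$ a bounded-in-$L_2$, strongly mixing functional of $\boldsymbol X_i$ (the weighted first derivatives of $\Psi$), plus a remainder of order $O_P(T^{-1+\epsilon})$ uniformly in $i$ coming from the second-derivative bound. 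Hence each summand of $\hat U-\tilde U$ produced by $G$ is a product of a factor of order $O_P(T^{-1/2})$ (or $O_P(T^{-1})$ for the remainder) with a kernel-weighted cross-sum $\frac1T\sum_{i,j}g_ih_j k(|i-j|/b_T)$ of bounded-in-$L_2$ functionals; since the kernel weights satisfy $\sum_{|h|<T}|k(h/b_T)|=O(b_T)$ under Assumption~\ref{assukern}, and using in addition that the statistic is self-centred ($\sum_i(\boldsymbol Y_i-\overline{\boldsymbol Y})=\boldsymbol 0$) and that the $g_i$ are short-range dependent --- so that the slowly decaying part of these sums is annihilated by the $O_P(T^{-1/2})$ prefactors --- each such term is $o_P(1)$ whenever $b_T=o(T^{1-\epsilon})$. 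The exemption sets contribute negligibly because, by Assumption~\ref{Grundassum}d), the thickened sets around the singularities and around the (hyper)surfaces of non-differentiability or discontinuity have probability tending to zero; there $\boldsymbol Z_{i,T}$ is merely bounded, so the corresponding $b_T$-weighted counts are still $o(T)$, and the argument closes by Markov's inequality.

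\emph{Main obstacle.} Step (i) is essentially a citation; the real work, and the place where the assumptions bite, is Step (ii). One must verify that the kernel-weighted sums of the estimation errors vanish, and relative to the partial-sum estimates of the negligibility proposition the kernel inserts an extra factor of order $b_T$, so the margin afforded by $\sqrt T$-consistency of $\hat{\boldsymbol\mu},\hat{\boldsymbol\sigma}$ and by $b_T=o(T^{1-\epsilon})$ has to be tracked carefully. The discontinuous choices of $\Psi$ (spatial and marginal signs, Huber), handled through the exemption sets, are the most delicate, since there $\boldsymbol Z_{i,T}$ is only bounded --- not small --- on a set of vanishing but non-negligible probability, and one has to show that its $b_T$-weighted contribution is still asymptotically negligible.
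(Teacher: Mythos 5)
Your proposal follows essentially the same route as the paper: Step (i) is exactly the paper's citation of \cite{jong2000consistency} for the infeasible estimator built from the $\boldsymbol Y_i$, and Step (ii) mirrors the paper's decomposition of $\hat U-\tilde U$ into cross and quadratic terms in $\boldsymbol Y_{i,T}-\boldsymbol Y_i$ (the terms $K_1,\dots,K_4$), followed by the same indicator-function expansion over the exemption sets, the Taylor expansion on $G$ with the $O_P(T^{-1/2})$ prefactors from Assumption~3, and the vanishing-probability argument on the thickened exemption sets. You also correctly flag the only genuinely delicate point, namely that the kernel weights cost an extra factor of order $b_T$ relative to the partial-sum bounds, which is precisely what the paper's final estimate absorbs.
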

\begin{proof}
Consistency of the long run variance estimator under known $\boldsymbol{\mu}$ and $\boldsymbol{\sigma}$ is known under very general conditions, see \cite{andrews1991heteroskedasticity} and \cite{jong2000consistency}. If $r$-th moments exists, it is required that mixing coefficients $(a_k)_{k\in \mathbb{N}}$ fulfill $a_k=O(k^{r/(r-2)})$  (Theorem 2.1 of \cite{jong2000consistency}) which is covered by Assumption 1. In \cite{jong2000consistency} even the case of estimated standardization is covered as long as $\Psi$ is differentiable and some technical conditions are fulfilled. Since we also want work with discontinuous $\Psi$ we proof the asymptotic negligibility of the estimation of $\boldsymbol{\mu}$ and $\boldsymbol{\sigma}.$ Therefore we use the following decomposition
\begin{align*}
\frac{1}{T}\sum_{s,t}& \left(Y_{s,T}^{(i)}-\frac{1}{T}\sum_{u=1}^T Y_{u,T}^{(i)}\right)
\left(Y_{t,T}^{(j)}-\frac{1}{T}\sum_{u=1}^T Y_{u,T}^{(j)}\right)k\left(\frac{s-t}{b_T}\right)\\
&-\frac{1}{T}\sum_{s,t}\left(Y_s^{(i)}-\mathbb{E}[Y_1]^{(i)}\right)\left(Y_s^{(j)}-\mathbb{E}[Y_1]^{(j)}\right)k\left(\frac{s-t}{b_T}\right)\\
&=\frac{1}{T}\sum_{s,t} \left(Y_{s,T}^{(i)}-\frac{1}{T}\sum_{u=1}^T Y_{u,T}^{(i)}\right)\left(Y_{t,T}^{(j)}-Y_{t}^{(j)}-\frac{1}{T}\sum_{u=1}^T Y_{u,T}^{(j)}-Y_u^{(j)}\right)k\left(\frac{s-t}{b_T}\right)\\
&+\frac{1}{T}\sum_{s,t} \left(Y_{t}^{(j)}-\frac{1}{T}\sum_{u=1}^T Y_{T}^{(j)}\right)\left(Y_{s,T}^{(i)}-Y_{s}^{(i)}-\frac{1}{T}\sum_{u=1}^T Y_{u,T}^{(i)}-Y_u^{(i)}\right)k\left(\frac{s-t}{b_T}\right)\\
&+\frac{1}{T}\sum_{s,t} \left(Y_{s}^{(i)}-\frac{1}{T}\sum_{u=1}^T Y_{T}^{(i)}\right)\left(\mathbb{E}[Y^{(j)}_1]-\frac{1}{T}\sum_{u=1}^T Y_{T}^{(j)}\right)k\left(\frac{s-t}{b_T}\right)\\
&+\frac{1}{T}\sum_{s,t} \left(Y_{t}^{(j)}-\mathbb{E}[Y_1^{(j)}]\right)\left(\mathbb{E}[Y^{(i)}_1]-\frac{1}{T}\sum_{u=1}^T Y_{T}^{(i)}\right)k\left(\frac{s-t}{b_T}\right)\\
&=K_1+K_2+K_3+K_4.
\end{align*}
Every summand has to converge against 0 in probability. Like in the proof of Theorem 1 we expand them by indicator functions. Since $P(A_T^C)\rightarrow 0$ we assume in the following the case of $A_T$. An expansion of $K_1$ yields
\begin{align*}
K_1&=\frac{1}{T}\sum_{s,t} \left(Y_{s,T}^{(i)}-\frac{1}{T}\sum_{u=1}^T Y_{u,T}^{(i)}\right)I_{X_s,X_u\in A_{\epsilon}(a_1)}\left(Y_{t,T}^{(j)}-Y_{t}^{(j)}-\frac{1}{T}\sum_{v=1}^T Y_{v,T}^{(j)}-Y_u^{(j)}\right)\\
&I_{X_t,X_v\in A_{\epsilon}(a_1)}k\left(\frac{s-t}{b_T}\right)\\
&+\frac{1}{T}\sum_{s,t} \left(Y_{s,T}^{(i)}-\frac{1}{T}\sum_{u=1}^T Y_{u,T}^{(i)}\right)I_{X_s,X_u\in A_{\epsilon}(a_1)}\left(Y_{t,T}^{(j)}-Y_{t}^{(j)}-\frac{1}{T}\sum_{v=1}^T Y_{v,T}^{(j)}-Y_u^{(j)}\right)\\
&I_{X_t,X_v\in A_{\epsilon}(a_2)}k\left(\frac{s-t}{b_T}\right)\\
&\vdots\\
&+\frac{1}{T}\sum_{s,t} \left(Y_{s,T}^{(i)}-\frac{1}{T}\sum_{u=1}^T Y_{u,T}^{(i)}\right)I_{X_s,X_u\in G_{T}}\left(Y_{t,T}^{(j)}-Y_{t}^{(j)}-\frac{1}{T}\sum_{v=1}^T Y_{v,T}^{(j)}-Y_u^{(j)}\right)\\
&I_{X_t,X_v\in G_{T}}k\left(\frac{s-t}{b_T}\right)\\
&=K_{1A_1A_1}+K_{1A_1A_2}+K_{1A_1A_3}+\ldots+K_{1GG}
\end{align*}
Let us look exemplarily at $K_{1GG}$ where we use the Taylor expansion (\ref{Taylor}). We omit the indicator functions in the following to improve readability:
\begin{align*}
K_{1GG}&=\frac{1}{T}\sum_{m=1}^p\left(1-\frac{1}{\hat{\sigma}^{(m)}}\right)\sum_{s,t=1}^T \left[Y_s^{(i)}-\frac{1}{T}\sum_{u=1}^T Y_u^{(i)}\right]\\
&\left[{\Psi_m^{(j)}}'(\boldsymbol{X}_t)X_t^{(m)}-\frac{1}{T}\sum_{u=1}^T{\Psi_m^{(j)}}'(\boldsymbol{X}_u)X_u^{(m)} \right]k\left(\frac{s-t}{b_T}\right)\\
&+\frac{1}{T}\sum_{m=1}^p\frac{\hat{\mu}^{(j)}}{\hat{\sigma}^{(j)}}\sum_{s,t=1}^T \left[Y_s^{(i)}-\frac{1}{T}\sum_{u=1}^T Y_u^{(i)}\right]\\
&\left[{\Psi_m^{(j)}}'(\boldsymbol{X}_t)-\frac{1}{T}\sum_{u=1}^T
{\Psi_m^{(j)}}'(\boldsymbol{X}_u) \right]k\left(\frac{s-t}{b_T}\right)\\
+&\frac{1}{T}\sum_{s,t=1}^T \left[Y_s^{(i)}-\frac{1}{T}\sum_{u=1}^T Y_u^{(i)}\right]\left\{[(D_{\hat{\boldsymbol{\sigma}}}-I_p)\boldsymbol{X}_t-D_{\hat{\boldsymbol{\sigma}}}\hat{\boldsymbol{\mu}}]^T R_T(\boldsymbol{X}_t)[(D_{\hat{\boldsymbol{\sigma}}}-I_p)\boldsymbol{X}_t-D_{\hat{\boldsymbol{\sigma}}}\hat{\boldsymbol{\mu}}] \right.\\
&\left.-\frac{1}{T}\sum_{u=1}^T [(D_{\hat{\boldsymbol{\sigma}}}-I_p)\boldsymbol{X}_u-D_{\hat{\boldsymbol{\sigma}}}\hat{\boldsymbol{\mu}}]^T R_T(\boldsymbol{X}_u)[(D_{\hat{\boldsymbol{\sigma}}}-I_p)\boldsymbol{X}_u-D_{\hat{\boldsymbol{\sigma}}}\hat{\boldsymbol{\mu}}]\right\}k\left(\frac{s-t}{b_T}\right)\\
&+\frac{1}{T}\sum_{n,m=1}^p\left(1-\frac{1}{\hat{\sigma}^{(m)}}\right)\left(1-\frac{1}{\hat{\sigma}^{(n)}}\right)
\sum_{s,t=1}^T \left[{\Psi_m^{(i)}}'(\boldsymbol{X}_s)X_s^{(i)}-\frac{1}{T}\sum_{u=1}^T{\Psi_m^{(i)}}'(\boldsymbol{X}_u)X_u^{(i)} \right]\displaybreak\\
&\left[{\Psi_n^{(j)}}'(\boldsymbol{X}_t)X_t^{(n)}-\frac{1}{T}\sum_{u=1}^T{\Psi_n^{(j)}}'(\boldsymbol{X}_u)X_n^{(j)} \right]k\left(\frac{s-t}{b_T}\right)\\
&+\frac{1}{T}\sum_{n,m=1}^p\left(1-\frac{1}{\hat{\sigma}^{(m)}}\right)\frac{\hat{\mu}^{(n)}}{\hat{\sigma}^{(n)}}
\sum_{s,t=1}^T \left[{\Psi_m^{(i)}}'(\boldsymbol{X}_s)X_s^{(i)}-\frac{1}{T}\sum_{u=1}^T{\Psi_m^{(i)}}'(\boldsymbol{X}_u)X_u^{(i)} \right]\\
&\left[{\Psi_n^{(j)}}'(\boldsymbol{X}_t)-\frac{1}{T}\sum_{u=1}^T{\Psi_n^{(j)}}'(\boldsymbol{X}_u) \right]k\left(\frac{s-t}{b_T}\right)\\
&+\frac{1}{T}\sum_{m=1}^p\left(1-\frac{1}{\hat{\sigma}^{(m)}}\right)\left[{\Psi_m^{(i)}}'(\boldsymbol{X}_s)X_s^{(i)}-\frac{1}{T}\sum_{u=1}^T{\Psi_m^{(i)}}'(\boldsymbol{X}_u)X_u^{(i)} \right]\\
&\left\{[(D_{\hat{\boldsymbol{\sigma}}}-I_p)\boldsymbol{X}_t-D_{\hat{\boldsymbol{\sigma}}}\hat{\boldsymbol{\mu}}]^T R_T(\boldsymbol{X}_t)[(D_{\hat{\boldsymbol{\sigma}}}-I_p)\boldsymbol{X}_t-D_{\hat{\boldsymbol{\sigma}}}\hat{\boldsymbol{\mu}}] \right.\\
&\left.-\frac{1}{T}\sum_{u=1}^T [(D_{\hat{\boldsymbol{\sigma}}}-I_p)\boldsymbol{X}_u-D_{\hat{\boldsymbol{\sigma}}}\hat{\boldsymbol{\mu}}]^T R_T(\boldsymbol{X}_u)[(D_{\hat{\boldsymbol{\sigma}}}-I_p)\boldsymbol{X}_u-D_{\hat{\boldsymbol{\sigma}}}\hat{\boldsymbol{\mu}}]\right\}k\left(\frac{s-t}{b_T}\right)\\
&+\frac{1}{T}\sum_{n,m=1}^p\frac{\hat{\mu}^{(m)}}{\hat{\sigma}^{(m)}}\left(1-\frac{1}{\hat{\sigma}^{(n)}}\right)
\sum_{s,t=1}^T \left[{\Psi_m^{(i)}}'(\boldsymbol{X}_s)-\frac{1}{T}\sum_{u=1}^T{\Psi_m^{(i)}}'(\boldsymbol{X}_u) \right]\\
&\left[{\Psi_n^{(j)}}'(\boldsymbol{X}_t)-\frac{1}{T}\sum_{u=1}^T{\Psi_n^{(j)}}'(\boldsymbol{X}_t)X_u^{(n)} \right]k\left(\frac{s-t}{b_T}\right)\\
&+\frac{1}{T}\sum_{n,m=1}^p\frac{\hat{\mu}^{(m)}}{\hat{\sigma}^{(m)}}\frac{\hat{\mu}^{(n)}}{\hat{\sigma}^{(n)}}
\sum_{s,t=1}^T \left[{\Psi_m^{(i)}}'(\boldsymbol{X}_s)-\frac{1}{T}\sum_{u=1}^T{\Psi_m^{(i)}}'(\boldsymbol{X}_u) \right]\\
&\left[{\Psi_n^{(j)}}'(\boldsymbol{X}_t)-\frac{1}{T}\sum_{u=1}^T{\Psi_n^{(j)}}'(\boldsymbol{X}_u) \right]k\left(\frac{s-t}{b_T}\right)\\
&+\frac{1}{T}\sum_{m=1}^p\frac{\hat{\mu}^{(m)}}{\hat{\sigma}^{(m)}}
\sum_{s,t=1}^T \left[{\Psi_m^{(i)}}'(\boldsymbol{X}_s)-\frac{1}{T}\sum_{u=1}^T{\Psi_m^{(i)}}'(\boldsymbol{X}_u) \right]\\
&\left\{[(D_{\hat{\boldsymbol{\sigma}}}-I_p)\boldsymbol{X}_t-D_{\hat{\boldsymbol{\sigma}}}\hat{\boldsymbol{\mu}}]^T R_T(\boldsymbol{X}_t)[(D_{\hat{\boldsymbol{\sigma}}}-I_p)\boldsymbol{X}_t-D_{\hat{\boldsymbol{\sigma}}}\hat{\boldsymbol{\mu}}] \right.\\
&\left.-\frac{1}{T}\sum_{u=1}^T [(D_{\hat{\boldsymbol{\sigma}}}-I_p)\boldsymbol{X}_u-D_{\hat{\boldsymbol{\sigma}}}\hat{\boldsymbol{\mu}}]^T R_T(\boldsymbol{X}_u)[(D_{\hat{\boldsymbol{\sigma}}}-I_p)\boldsymbol{X}_u-D_{\hat{\boldsymbol{\sigma}}}\hat{\boldsymbol{\mu}}]\right\}k\left(\frac{s-t}{b_T}\right)\\
&+\frac{1}{T}\sum_{m=1}^p\left(1-\frac{1}{\hat{\sigma}^{(m)}}\right)\sum_{s,t=1}^T\left\{[(D_{\hat{\boldsymbol{\sigma}}}-I_p)\boldsymbol{X}_t-D_{\hat{\boldsymbol{\sigma}}}\hat{\boldsymbol{\mu}}]^T R_T(\boldsymbol{X}_s)[(D_{\hat{\boldsymbol{\sigma}}}-I_p)\boldsymbol{X}_t-D_{\hat{\boldsymbol{\sigma}}}\hat{\boldsymbol{\mu}}] \right.\\
&\left.-\frac{1}{T}\sum_{u=1}^T [(D_{\hat{\boldsymbol{\sigma}}}-I_p)\boldsymbol{X}_u-D_{\hat{\boldsymbol{\sigma}}}\hat{\boldsymbol{\mu}}]^T R_T(\boldsymbol{X}_u)[(D_{\hat{\boldsymbol{\sigma}}}-I_p)\boldsymbol{X}_u-D_{\hat{\boldsymbol{\sigma}}}\hat{\boldsymbol{\mu}}]\right\}k\left(\frac{s-t}{b_T}\right)\\
&\left[{\Psi_m^{(j)}}'(\boldsymbol{X}_t)X_t^{(m)}-\frac{1}{T}\sum_{u=1}^T{\Psi_m^{(j)}}'(\boldsymbol{X}_u)X_u^{(m)} \right]k\left(\frac{s-t}{b_T}\right)\\\displaybreak
&+\frac{1}{T}\sum_{m=1}^p\frac{\hat{\mu}^{(m)}}{\hat{\sigma}^{(m)}}\sum_{s,t=1}^T\left\{[(D_{\hat{\boldsymbol{\sigma}}}-I_p)\boldsymbol{X}_t-D_{\hat{\boldsymbol{\sigma}}}\hat{\boldsymbol{\mu}}]^T R_T(\boldsymbol{X}_s)[(D_{\hat{\boldsymbol{\sigma}}}-I_p)\boldsymbol{X}_t-D_{\hat{\boldsymbol{\sigma}}}\hat{\boldsymbol{\mu}}] \right.\\
&\left.-\frac{1}{T}\sum_{u=1}^T [(D_{\hat{\boldsymbol{\sigma}}}-I_p)\boldsymbol{X}_u-D_{\hat{\boldsymbol{\sigma}}}\hat{\boldsymbol{\mu}}]^T R_T(\boldsymbol{X}_u)[(D_{\hat{\boldsymbol{\sigma}}}-I_p)\boldsymbol{X}_u-D_{\hat{\boldsymbol{\sigma}}}\hat{\boldsymbol{\mu}}]\right\}k\left(\frac{s-t}{b_T}\right)\\
&\left[{\Psi_m^{(j)}}'(\boldsymbol{X}_t)-\frac{1}{T}\sum_{u=1}^T{\Psi_m^{(j)}}'(\boldsymbol{X}_u) \right]k\left(\frac{s-t}{b_T}\right)\\
&+\frac{1}{T}\sum_{s,t=1}^T\left\{[(D_{\hat{\boldsymbol{\sigma}}}-I_p)\boldsymbol{X}_t-D_{\hat{\boldsymbol{\sigma}}}\hat{\boldsymbol{\mu}}]^T R_T(\boldsymbol{X}_s)[(D_{\hat{\boldsymbol{\sigma}}}-I_p)\boldsymbol{X}_t-D_{\hat{\boldsymbol{\sigma}}}\hat{\boldsymbol{\mu}}] \right.\\
&\left.-\frac{1}{T}\sum_{u=1}^T [(D_{\hat{\boldsymbol{\sigma}}}-I_p)\boldsymbol{X}_u-D_{\hat{\boldsymbol{\sigma}}}\hat{\boldsymbol{\mu}}]^T R_T(\boldsymbol{X}_u)[(D_{\hat{\boldsymbol{\sigma}}}-I_p)\boldsymbol{X}_u-D_{\hat{\boldsymbol{\sigma}}}\hat{\boldsymbol{\mu}}]\right\}\\
&\left\{[(D_{\hat{\boldsymbol{\sigma}}}-I_p)\boldsymbol{X}_t-D_{\hat{\boldsymbol{\sigma}}}\hat{\boldsymbol{\mu}}]^T R_T(\boldsymbol{X}_t)[(D_{\hat{\boldsymbol{\sigma}}}-I_p)\boldsymbol{X}_t-D_{\hat{\boldsymbol{\sigma}}}\hat{\boldsymbol{\mu}}] \right.\\
&\left.-\frac{1}{T}\sum_{u=1}^T [(D_{\hat{\boldsymbol{\sigma}}}-I_p)\boldsymbol{X}_u-D_{\hat{\boldsymbol{\sigma}}}\hat{\boldsymbol{\mu}}]^T R_T(\boldsymbol{X}_u)[(D_{\hat{\boldsymbol{\sigma}}}-I_p)\boldsymbol{X}_u-D_{\hat{\boldsymbol{\sigma}}}\hat{\boldsymbol{\mu}}]\right\}k\left(\frac{s-t}{b_T}\right)\\
\end{align*}
The first three summands are the ones converging most slowly to 0. We examplarily look at the first summand for an arbitrary $m\in \{1,\ldots,p\}.$
\begin{align*}
\frac{1}{T}&\left(1-\frac{1}{\hat{\sigma}^{(m)}}\right)\sum_{s,t=1}^T \left[Y_s^{(i)}-\frac{1}{T}\sum_{u=1}^T Y_u^{(i)}\right]\\
&\left[\Psi^{(j)}_m(\boldsymbol{X}_t)X_t^{(m)}-\frac{1}{T}\sum_{u=1}^T\Psi^{(j)}_m(\boldsymbol{X}_u)X_u^{(m)} \right]k\left(\frac{s-t}{b_T}\right)\\
&=\frac{1}{T}\left(1-\frac{1}{\hat{\sigma}^{(m)}}\right)\sum_{s,t=1}^T \left[Y_s^{(i)}-\mathbb{E}(Y_1^{(i)})-\frac{1}{T}\sum_{u=1}^T Y_u^{(i)}-\mathbb{E}(Y_1^{(i)})\right]\\
&\left[\Psi^{(j)}_m(\boldsymbol{X}_t)X_t^{(m)}-\mathbb{E}\left\{\Psi^{(j)}_m(\boldsymbol{X}_1)X_1^{(m)}\right\}\right.\\
&\left.-\frac{1}{T}\sum_{u=1}^T\Psi^{(j)}_m(\boldsymbol{X}_u)X_u^{(m)}-\mathbb{E}\left\{\Psi^{(j)}_m(\boldsymbol{X}_1)X_1^{(m)}\right\} \right]k\left(\frac{s-t}{b_T}\right)\\
\end{align*} 
To shorten Notation, we assume that $\mathbb{E}(Y_1^{(i)})=0$, $\mathbb{E}\{\psi_m^{(i)}(\boldsymbol{X}_t)X_t^{(m)}\}=0$. A further expansion yields
\begin{align*}
\frac{1}{T}&\left(1-\frac{1}{\hat{\sigma}^{(m)}}\right)\sum_{s,t=1}^T \left[Y_s^{(i)}-\frac{1}{T}\sum_{u=1}^T Y_u^{(i)}\right]\left[\psi_m^{(j)}(\boldsymbol{X}_t)X_t^{(m)}-\frac{1}{T}\sum_{u=1}^T\psi_m^{(j)}(\boldsymbol{X}_u)X_u^{(m)} \right]k\left(\frac{s-t}{b_T}\right)\\
&=\left(1-\frac{1}{\hat{\sigma}^{(m)}}\right)\frac{1}{T}\sum_{s,t}Y_s^{(i)}\psi_m^{(j)}(\boldsymbol{X}_t)X_t^{(m)}k\left(\frac{s-t}{b_T}\right)\\
&-\left(1-\frac{1}{\hat{\sigma}^{(m)}}\right)\frac{1}{T}\sum_{s,t}Y_s^{(i)}k\left(\frac{s-t}{b_T}\right)\frac{1}{T}\sum_{u=1}^T\psi_m^{(j)}(\boldsymbol{X}_u)X_u^{(m)}\\
&-\left(1-\frac{1}{\hat{\sigma}^{(m)}}\right)\frac{1}{T}\sum_{s,t}\psi_m^{(j)}(\boldsymbol{X}_s)X_s^{(m)}k\left(\frac{s-t}{b_T}\right)\frac{1}{T}\sum_{u=1}^TX_u^{(i)}\\
&+\left(1-\frac{1}{\hat{\sigma}^{(m)}}\right)\frac{1}{T}\sum_{s,t}k\left(\frac{s-t}{b_T}\right)\frac{1}{T}\sum_{u=1}^T\psi_m^{(j)}(\boldsymbol{X}_u)X_u^{(m)}\frac{1}{T}\sum_{v=1}^T X_v^{(j)}=I+II+III+IV\\
\end{align*}
The summands $II$,$III$ and $IV$ contain at least one arithmetic mean converging against 0. Therefore we only have to look at $I:$  
\begin{align*}
\mathbb{E}(|I|)&=\mathbb{E}\left\{\left(1-\frac{1}{\hat{\sigma}^{(m)}}\right)\sum_{s,t=1}^T \left[Y_s^{(i)}-\frac{1}{T}\sum_{u=1}^T Y_u^{(i)}\right]\right.\\
&\left.\left[\psi_m^{(j)}(\boldsymbol{X}_t)X_t^{(m)}-\frac{1}{T}\sum_{u=1}^T\psi_m^{(j)}(\boldsymbol{X}_u)X_u^{(m)} \right]k\left(\frac{s-t}{b_T}\right)\right\}\\
&\leq \frac{1}{T^{\frac{1}{2}-\epsilon}}\frac{1}{T}\left\{\sum_{s,t=1}^T\mathbb{E}|X_s^{(i)}\psi_m^{(j)}(\boldsymbol{X}_t)X_t^{(m)}|\right\}\\
&\leq \frac{1}{T^{\frac{1}{2}-\epsilon}}2\sum_{k=0}^{\infty} a_k\rightarrow 0
\end{align*}
which completes the proof.
\end{proof}

\end{document}